\theoremstyle{plain}
\newtheorem{theorem}{Theorem}
\newtheorem{proposition}{Proposition}
\newtheorem{lemma}[theorem]{Lemma}
\newtheorem{remark}{Remark}
\DeclareMathOperator{\dimh}{dim_H}
\DeclareMathOperator{\E}{\mathsf{E}}
\newcommand{\Prob}{\mathsf{P}}
\DeclareMathOperator{\T}{\mathbb{T}}
\DeclareMathOperator{\UU}{\mathcal{U}}
\DeclareMathOperator{\EE}{\mathcal{E}_\mathrm{ah}}
\title{Uniform random covering problems}
\author{Henna Koivusalo}
\address{School of Mathematics, University of Bristol, Fry Building, Woodland Road, Bristol BS8 1UG, UK}
\email{henna.koivusalo@bristol.ac.uk}
\author{Lingmin Liao}
\address{Univ Paris Est Creteil, CNRS, LAMA, F-94010 Creteil,
  France \& Univ Gustave Eiffel, LAMA, F-77447 Marne-la-Vall\'ee,
  France} \email{lingmin.liao@u-pec.fr}
\author{Tomas Persson}
\address{Centre for Mathematical Sciences, Lund University, Box
  118, 221 00 Lund, Sweden}
\email{tomasp@maths.lth.se}
\thanks{We thank Mark Holland for informing us about the reference \cite{Galambos}}
\begin{document}

\begin{abstract}
  Motivated by the random covering problem and the study of
  Dirichlet uniform approximable numbers, we investigate the uniform
  random covering problem. Precisely, consider an i.i.d.\
  sequence $\omega=(\omega_n)_{n\geq 1}$ uniformly distributed on the unit
  circle $\mathbb{T}$ and a sequence
  $(r_n)_{n\geq 1}$ of positive real numbers with limit $0$. We investigate the
  size of the random set
  \[
  \UU (\omega):=\{y\in \mathbb{T}: \ \forall N\gg 1,
  \ \exists n \leq N, \ \text{s.t.} \ \| \omega_n -y \| < r_N \}.
  \]
  Some sufficient conditions for $\UU(\omega)$ to be almost
  surely the whole space, of full Lebesgue measure, or countable,
  are given. In the case that $\UU(\omega)$ is a Lebesgue
  null measure set, we provide some estimations for the upper and
  lower bounds of Hausdorff dimension.
\end{abstract}

\date{\today}
\subjclass[2010]{60D05, 28A78}

\maketitle

\section{Introduction}

Let $\mathbbm{T}=\mathbbm{R}/\mathbbm{Z}$ be the one dimensional
torus. Denote by $\| \cdot \|$ the distance of a point in
$\mathbbm{T}$ to the point $0$.  The famous Dirichlet Theorem
states that for any real numbers $\theta$ and $N\geq 1$, there
exists an integer $1\leq n \leq N$, such that $\|n\theta\| <
N^{-1}$. As corollary, for any real number $\theta$, there exists
infinitely many integers $n$, such that $\| n \theta \|
<n^{-1}$. The Dirichlet Theorem and its corollary tell us that
for any $\theta$, in two different ways, $0$ is approximated by
the sequence $n\theta$ with degree-one-polynomial speed. Such two
different ways are called uniform approximation (uniform with
respect to $N$) and asymptotic approximation in the survey paper \cite{Wa12} of Waldschmidt.

In general, one can study the approximation of any point $y$ by
the sequence $n \theta$ with a faster speed.
For the asymptotic approximation, in 2003, Bugeaud \cite{Bu03},
and independently, Schmeling and Troubetzkoy \cite{ST03} proved
that for any irrational $\theta$, for any $\alpha>1$, the
Hausdorff dimension of the set
\[
\{\, y\in \mathbbm{T}: \| n\theta -y \| < n^{-\alpha} \ \text{for
  infinitely many } n \,\}
\]
is $1/\alpha$. The corresponding uniform approximation problem
was quite recently studied by Kim and Liao \cite{KL19} who proved
that the Hausdorff dimension of the set
\[
\UU[\theta, \alpha] := \{\, y\in \mathbbm{T}: \ \forall
N\gg 1, \ \exists n \in [1, N], \ \text{s.t.} \ \| n\theta -y \|
< N^{-\alpha} \, \}
\]
depends on the irrationality exponent of $\theta$ defined by $ w(\theta) := \sup \{ s > 0 : \liminf_{j \to \infty} j^{s} \| j \theta \| = 0 \}.$  Specially,
when $w(\theta)=1$ (thus for
Lebesgue almost all $\theta$), the Hausdorff dimension of
$\UU[\theta, 1]$ is between $1/2$ and $1$. For the
complicated dimensional formulae and estimations, one can consult
\cite{KL19}.

Motivated by these metric number theory results, one wonders
about the analog results when the sequence $n \theta$ is replaced
by an i.i.d.\ sequence.
In fact, for the asymptotic approximation, this is nothing but the widely
studied Dvoretzky covering problem. Let $(\omega_n)_{n\geq 1}$ be an
i.i.d.\ random sequence of uniform distribution on the unit
circle $\mathbbm{T}$. Let $(r_n)_{n\geq 1}$ be a decreasing
sequence of positive real numbers with $\sum_{n=1}^\infty
r_n=\infty$. In 1956, Dvoretzky \cite{Dr56} asked what are
necessary and sufficient conditions on $(r_n)_{n\geq 1}$ such
that almost surely, all points in $\mathbbm{T}$ are covered by
infinitely many open intervals with center $\omega_n$ and radius
$r_n$, 
or equivalently, 
\begin{align}\label{prob-covering}
  \Prob \big(\{\, y\in \mathbbm{T}: \| \omega_n -y \| < r_n \ \text{for
    infinitely many } n \,\}=\mathbbm{T}\big)=1.
\end{align}
This problem attracted much attention of mathematicians, such as
L\'evy, Kahane, Erd\H{o}s, Billard, et al.\ (see Kahane's book
\cite{Ka85} and his survey paper \cite{Ka00}).  Specially, for
the case $r_n=c/n$ ($c>0$), Kahane \cite{Ka59} proved in 1959
that \eqref{prob-covering} holds when $c>1$.  In 1961, Erd\H{o}s
\cite{Er61} announced that \eqref{prob-covering} holds if and
only if $c\geq 1$, but never published a proof. In 1965, Billard
\cite{Bi65} showed that \eqref{prob-covering} does not hold if
$c<1$. Finally, Orey \cite{Or71} in 1971 and independently
Mandelbrot \cite{Ma72} in 1972, proved that \eqref{prob-covering}
holds if $c = 1$. The complete solution to the Dvoretzky problem
was given in 1972 by Shepp \cite{Shepp} who proved that
\eqref{prob-covering} holds if and only if
\[
\sum_{n=1}^\infty \frac{1}{n^2} \exp(r_1 + \ldots + r_n)=\infty.
\] 

When $r_n$ decreases to $0$ faster, one is also interested in the
Hausdorff dimension of the set of points which are covered
infinitely often by the random intervals. In 2004, Fan and Wu
\cite{FW04} proved that almost surely, the Hausdorff dimension of
the set
\[
\{y\in \mathbbm{T}: \| \omega_n -y \| < n^{-\alpha} \ \text{for
  infinitely many } n \}
\]
is $1/\alpha$ for all $\alpha \geq 1$. Comparing with the above
mentioned result of Bugeaud and Schmeling--Troubetzkoy, one finds
that the i.i.d.\ sequence exhibits some similar asymptotic approximation
property as the irrational rotation sequence $n \theta$.

As a counter part of the famous random covering problem which
corresponds to the asymptotic Diophantine approximation, we would
like to study the uniform covering problem which corresponds to
the uniform Diophantine approximation. Analogously, for an
i.i.d.\ random sequence $\omega=(\omega_n)_{n\geq 1}$ of uniform
distribution and a real positive sequence
$(r_n)_{n\geq 1}$, we want to describe the size (in the sense
of Lebesgue measure and Hausdorff dimension) of the random set
\[
\UU(\omega) := \{\, y\in \mathbbm{T}: \ \forall N\gg 1,
\ \exists n \leq N, \ \text{s.t.} \ \| \omega_n -y \| < r_N \,\}.
\]
If we let $B_{k,n} = B (\omega_k, r_n)$
and
\[
E_n = \bigcup_{k=1}^n B_{k,n},
\]
then
\[
\UU(\omega) = \liminf_{n \to \infty} E_n.
\]

\medskip
\section{Results}

Our first main theorem gives a sufficient condition and a
necessary condition for $\Prob (\UU(\omega)=\mathbbm{T}) = 1$.

\begin{theorem}\label{the:cover}

(i)  If $\sum\limits_{n=1}^\infty n (1-r_n)^{n}<\infty$, then almost
  surely $\UU(\omega) = \mathbbm{T}$.
  
  In particular, if $r_n = \frac{c \log n}{n}$ and $c > 2$, then
  almost surely $\UU(\omega) = \mathbbm{T}$. 

\medskip
(ii)  If $\liminf\limits_{n \to \infty} n (1-r_n)^n > 0$, then with positive
  probability, $\UU (\omega) \neq \mathbb{T}$.

  Furthermore, if $r_n = \frac{c \log n}{n}$ and $c < 1$, then
  $\UU (\omega) \neq \mathbb{T}$ almost surely.
\end{theorem}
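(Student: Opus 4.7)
\emph{Plan.} For part (i) my plan is to apply the first Borel--Cantelli lemma to the events $A_n := \{E_n \neq \T\}$, with the following discretisation bound. Placing $n$ equispaced points $y_1,\dots,y_n \in \T$, if $E_n \neq \T$ then some $y\in\T$ is uncovered, and its nearest grid point $y_j$ satisfies $\|y_j - \omega_k\| \geq r_n - 1/(2n)$ for all $k \leq n$. Since the $\omega_k$ are i.i.d.\ uniform, each fixed grid point has this property with probability $(1-2r_n+1/n)^n$, so a union bound gives
\[
\Prob(A_n) \leq n(1 - 2r_n + 1/n)^n \leq n(1 - r_n)^n,
\]
where the last inequality holds once $r_n \geq 1/n$ (which we may assume, since if $r_n < 1/n$ for infinitely many $n$ the hypothesis $\sum n(1-r_n)^n<\infty$ is violated). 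Borel--Cantelli then forces $E_n = \T$ for all large $n$ almost surely, hence $\UU(\omega) = \liminf E_n = \T$ almost surely. The specialisation $r_n = c\log n/n$ with $c > 2$ is then immediate from the asymptotic $n(1-r_n)^n \sim n^{1-c}$.

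For part (ii) my first observation is that the event $\{\UU(\omega) = \T\}$ is invariant under finite permutations of $(\omega_n)_{n\geq 1}$: exchanging $\omega_i$ and $\omega_j$ leaves $E_n$ unchanged for every $n \geq \max(i,j)$, and $\liminf E_n$ depends only on the tail of the sequence $(E_n)$. The Hewitt--Savage zero--one law therefore gives $\Prob(\UU(\omega) = \T) \in \{0,1\}$, so ``with positive probability'' and ``almost surely'' coincide in part (ii); it suffices to prove $\Prob(\UU(\omega) \neq \T) > 0$.

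My plan for the latter is to complement part (i) with a matching lower bound. Bonferroni truncation of Stevens' exact formula for random arc covering yields $\Prob(E_n \neq \T) \geq n(1-2r_n)^{n-1} - \binom{n}{2}(1-4r_n)^{n-1}$, and combining the asymptotic $(1-2r_n)^{n-1} \sim \bigl((1-r_n)^n\bigr)^2$ with the hypothesis $\liminf n(1-r_n)^n > 0$ gives $\Prob(E_n\neq\T) \gtrsim 1/n$ for large $n$, hence $\sum_n \Prob(E_n \neq \T) = \infty$. Since these events are not independent, I would next pass to a geometric subsequence $n_k = \lfloor a^k \rfloor$ and apply a Kochen--Stone second-moment argument, controlling the pairwise correlation via a two-point analogue of Stevens, to conclude $\Prob(E_{n_k} \neq \T \text{ i.o.}) > 0$. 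The hardest step is the bridge from ``$E_n^c$ non-empty i.o.'' to the strictly stronger ``$\UU(\omega)^c = \limsup E_n^c$ non-empty'': because $r_n \to 0$, a sequence of uncovered points $y_k \in E_{n_k}^c$ need \emph{not} accumulate at a point that is uncovered at any fixed stage, so I would strengthen the construction to track at each successful $n_k$ an entire gap arc $I_k \subseteq E_{n_k}^c$ of length $\gtrsim r_{n_k}$, refining so that $I_1 \supseteq I_2 \supseteq \cdots$; the nested intersection $\bigcap_k \overline{I_k}$ is then a non-empty compact subset of $\UU(\omega)^c$. Finally, the specialisation $r_n = c\log n/n$ with $c < 1$ reduces to the verification $n(1-r_n)^n \sim n^{1-c} \to \infty$, so the hypothesis of the general statement holds and Hewitt--Savage delivers ``almost surely''.
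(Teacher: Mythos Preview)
Part~(i) is correct. You and the paper both feed an upper bound on $\Prob(E_n \neq \mathbbm{T})$ into Borel--Cantelli, but you obtain the bound by a grid discretisation and union bound, whereas the paper quotes Shepp's inequality (formula~(90) in \cite{Shepp}) to get $\Prob(\mathbbm{T} \not\subset E_n) \leq 2(n+1)(1-r_n)^{n-1}$ directly. Your route is more elementary and self-contained; the paper's gives slightly sharper constants with no extra work.

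For Part~(ii), your Hewitt--Savage observation is correct and worth keeping: the event $\{\UU(\omega)=\mathbbm{T}\}$ is invariant under finite permutations of $(\omega_n)$, so its probability is $0$ or $1$ --- this in fact answers the first open question the paper poses after the theorems. But your Steps~2--4 are far more elaborate than what the paper does, and Step~4 is a genuine gap. The paper bypasses all of this by using Shepp's \emph{lower} bound (formula~(91) in \cite{Shepp}), which yields not $\Prob(E_n\neq\mathbbm{T})\gtrsim 1/n$ but the much stronger $\liminf_n \Prob(E_n\neq\mathbbm{T}) \geq \frac{p}{p+2}>0$ where $p=\liminf n(1-r_n)^n$; with a constant lower bound one simply invokes $\Prob(\limsup_n A_n) \geq \limsup_n \Prob(A_n)$, and no second-moment argument is needed. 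Your weaker Bonferroni estimate forces the Kochen--Stone machinery, whose correlation control you leave entirely unspecified. Worse, your Step~4 --- refining to nested gap arcs $I_1 \supseteq I_2 \supseteq \cdots$ --- is asserted, not argued: the gap in $E_{n_k}^c$ sits wherever the current maximal spacing among $\omega_1,\ldots,\omega_{n_k}$ happens to be, and as new sample points arrive this location typically jumps. Producing a single arc that survives (in nested form) through infinitely many levels is exactly the substance of the claim, and your plan gives no mechanism for it. (You are right that the passage from ``$E_n\neq\mathbbm{T}$ infinitely often'' to ``$\UU(\omega)\neq\mathbbm{T}$'' is not automatic; the paper writes this identification as an equality of events without further comment.)
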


\begin{remark}
  Note that the condition $\sum\limits_{n=1}^\infty n (1-r_n)^n < \infty$ also holds
  for $r_n = \frac{2 \log n+\gamma\log\log n}{n}$,
  $\gamma > 1$.
\end{remark}

As for the Lebesgue measure, we have the following theorem.

\begin{theorem}\label{the:measure} 
 (i) Assume that $(r_n)_{n\geq 1}$ is decreasing. Then either $\Prob (\lambda
  (\UU (\omega)) = 1) = 1$ or $\Prob (\lambda
  (\UU (\omega)) = 0) = 1$, depending on $(r_n)_{n\geq 1}$.

\smallskip
  (ii) Suppose that the sequences $(r_n)_{n\geq 1}$ and $(nr_n)_{n\geq 1}$ are
  decreasing.  We have $\lambda (\UU (\omega)) = 1$ almost
  surely, if and only if the sequence $(r_n)_{n\geq 1}$ satisfies
  \begin{equation} \label{eq:galambos}
   \sum_{n=1}^\infty r_n = \infty \qquad \text{and} \qquad \sum_{n=1}^\infty r_n e^{- 2 n r_n} < \infty.
  \end{equation}
  In particular, if $r_n = \frac{c \log \log n}{2n}$ then
  $\lambda (\UU (\omega)) = 1$ almost surely if and only if $c >
  1$.
\end{theorem}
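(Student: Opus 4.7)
For part~(i), I would combine Fubini with a $0$--$1$ law. By translation invariance of the law of $\omega$ (since $y \in \UU(\omega)$ iff $0 \in \UU(\omega-y)$ and $\omega - y \stackrel{d}{=} \omega$), the probability $\Prob(y \in \UU(\omega))$ is independent of $y$, so Fubini yields $\E[\lambda(\UU(\omega))] = \Prob(0 \in \UU(\omega))$. Since $r_n \to 0$, modifying one coordinate $\omega_k$ to $\omega_k'$ changes $\UU(\omega)$ only at the points $\{\omega_k, \omega_k'\}$: any $y$ in the symmetric difference would need $n=k$ as its sole witness for all large $N$, and $\|\omega_k - y\| < r_N \to 0$ then forces $\omega_k = y$. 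Hence $\lambda(\UU(\omega))$ is tail-measurable, so a.s.\ constant $c$ by Kolmogorov's law. The same argument shows that $\{0 \in \UU(\omega)\}$ agrees with a tail event up to the null set $\{\omega_k = 0 \text{ for some } k\}$, so $\Prob(0 \in \UU(\omega)) \in \{0,1\}$; thus $c \in \{0,1\}$.

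For part~(ii), by~(i) it is enough to identify when $\Prob(0 \in \UU(\omega)) = 1$. Writing $M_N := \min_{n \leq N} \|\omega_n\|$, this is the event that $\{M_N \geq r_N\}$ fails for all large $N$, with individual probabilities $\Prob(M_N \geq r_N) = (1-2r_N)^N \sim e^{-2Nr_N}$ by independence. The necessity of $\sum r_n = \infty$ follows from an expectation bound: monotonicity of $(nr_n)$ forces $nr_n \to 0$ whenever $\sum r_n < \infty$ (otherwise $r_n \gtrsim c/n$ and $\sum r_n$ diverges), and then $\E[\lambda(E_N)] \leq 2Nr_N \to 0$, so Fatou gives $\lambda(\UU) = 0$ a.s. The sharp equivalence with $\sum r_n e^{-2nr_n} < \infty$ is the key step; in the special case $r_n = (c \log \log n)/(2n)$ one has $r_n e^{-2 n r_n} = c(\log\log n)/(2n(\log n)^c)$, summable iff $c>1$.

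\textbf{Main obstacle.} The technical core is the extra factor $r_n$ in $\sum r_n e^{-2nr_n}$. A direct first Borel--Cantelli on $\{M_N \geq r_N\}$ only yields the strictly stronger sufficient condition $\sum e^{-2nr_n} < \infty$. To recover the factor $r_n$, I would group the indices into blocks $[N_k, N_{k+1})$ on which $r_N$ varies by a bounded factor (say $r_{N_k} \sim 2^{-k}$); the inclusion
\[
\bigcup_{N \in [N_k, N_{k+1})} \{M_N \geq r_N\} \subseteq \{M_{N_k} \geq r_{N_{k+1}-1}\}
\]
then has probability of order $e^{-2 N_k r_{N_k}}$. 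Under the monotonicity of $(nr_n)$, the block sizes satisfy $N_{k+1}-N_k \sim 1/r_{N_k}$, so $\sum_k e^{-2N_k r_{N_k}}$ is comparable to $\sum_n r_n e^{-2nr_n}$, proving the sufficient direction via Borel--Cantelli over blocks. The matching necessity---that divergence of $\sum r_n e^{-2nr_n}$ forces $\Prob(0 \notin \UU) > 0$---requires a Kochen--Stone second-moment argument showing approximate independence across blocks; this is the hardest ingredient and is the content of the criterion of Galambos~\cite{Galambos}.
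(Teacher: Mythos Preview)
Your proposal is correct and follows the same overall architecture as the paper: rotational invariance plus a zero--one law for the pointwise event $\{y \in \UU(\omega)\}$, then Fubini for part~(i), and Galambos' extreme-value criterion for part~(ii). The one genuine difference is in how the zero--one law is obtained. The paper recasts $\{y \in \UU(\omega)\}$ as an ``eventually always hitting'' event for the shift on $\mathbb{T}^{\mathbb{N}}$ and invokes \cite[Lemma~1]{KKP20} to get $\Prob(y \in \UU(\omega)) \in \{0,1\}$. Your route is more elementary and self-contained: you observe that changing a single coordinate $\omega_k$ can alter $\UU(\omega)$ only at the two points $\{\omega_k,\omega_k'\}$ (any other $y$ in the symmetric difference would force $\|\omega_k - y\| < r_N$ along a subsequence $N \to \infty$), so $\lambda(\UU(\omega))$ and, up to a null set, the indicator of $\{0 \in \UU(\omega)\}$ are tail-measurable and Kolmogorov applies. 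This buys independence from the external reference at essentially no cost. For part~(ii) the two proofs coincide: the paper sets $X_n = |\omega_n - y|^{-1}$, $u_n = r_n^{-1}$ and applies \cite[Theorem~4.3.1]{Galambos} to $\{Z_n \leq u_n \text{ i.o.}\}$, which is exactly your $\{M_N \geq r_N \text{ i.o.}\}$; your sketch of the blocking and Kochen--Stone mechanism behind Galambos' criterion is accurate but not needed once the citation is in place.
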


\begin{remark}
  Condition \eqref{eq:galambos} holds also for $r_n = \frac{\log
    \log n+ \gamma \log\log\log n}{2n}$ with $\gamma>1$.
\end{remark}

We also give a sufficient condition that $\UU(\omega)$ is
countable.

\begin{theorem} \label{the:countable}
  If $\sum\limits_{n=1}^\infty n r_n < \infty$, then almost surely
  $\UU(\omega) = \{\, \omega_k : k \in \mathbbm{N} \,\}$.
\end{theorem}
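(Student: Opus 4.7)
My plan for this theorem has two halves. The trivial inclusion $\{\omega_k : k \in \mathbb{N}\} \subseteq \UU(\omega)$ is immediate: for each $k$, $\omega_k \in B(\omega_k, r_n) \subseteq E_n$ for every $n \geq k$, so $\omega_k \in \liminf_n E_n = \UU(\omega)$. The substance is the reverse inclusion, which I would obtain by combining two applications of the Borel--Cantelli lemma with a deterministic ``chain'' argument.

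The first probabilistic input is an \emph{isolation} event. For each $n$, let $\mathcal{A}_n = \{\exists\, j < n : \|\omega_j - \omega_n\| \leq 2 r_n\}$; a union bound gives $\Prob(\mathcal{A}_n) \leq 4(n-1) r_n$, which is summable by hypothesis. Borel--Cantelli then yields an almost surely random $N_0$ such that for every $n \geq N_0$ and every $j < n$, $\|\omega_j - \omega_n\| > 2 r_n$. The second input controls ``near pairs'': for $a < b$, set $\mathcal{B}_{a,b} = \{\|\omega_a - \omega_b\| < 2 r_{b-1}\}$, whose probability is at most $4 r_{b-1}$, so
\[
\sum_{a < b} \Prob(\mathcal{B}_{a,b}) \;\leq\; 4 \sum_b (b-1) r_{b-1} \;=\; 4 \sum_m m r_m \;<\; \infty,
\]
and only finitely many $\mathcal{B}_{a,b}$ occur almost surely.

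The deterministic step is the heart of the argument. Fix $\omega$ in the full-measure event where both conclusions above hold, and suppose for contradiction that some $y \in \UU(\omega)$ satisfies $y \neq \omega_k$ for every $k$. For each sufficiently large $n$ I choose a witness $k_n \leq n$ with $\|y - \omega_{k_n}\| < r_n$. Since $d(y, \omega_k) > 0$ for each fixed $k$ while $r_n \to 0$ (as $n r_n \to 0$), the indices $k_n$ must tend to infinity, and I enumerate a strictly increasing sequence $K_1 < K_2 < \cdots$ of values taken by $(k_n)$, all eventually at least $N_0$. Letting $t_{j+1}$ be the time at which the witness index switches from $K_j$ to $K_{j+1}$, the triangle inequality gives
\[
\|\omega_{K_j} - \omega_{K_{j+1}}\| \;\leq\; \|y - \omega_{K_j}\| + \|y - \omega_{K_{j+1}}\| \;<\; r_{t_{j+1}-1} + r_{t_{j+1}} \;\leq\; 2 r_{K_{j+1}-1},
\]
the final step using $K_{j+1} \leq t_{j+1}$ and monotonicity of $(r_n)$. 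Each consecutive pair $(K_j, K_{j+1})$ therefore witnesses $\mathcal{B}_{K_j, K_{j+1}}$, producing infinitely many such events and contradicting the second Borel--Cantelli conclusion; hence no such $y$ exists, proving $\UU(\omega) \subseteq \{\omega_k : k \in \mathbb{N}\}$.

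The main technical obstacle is the deterministic chain step, specifically the control of $r_{t_{j+1}-1}$ by $r_{K_{j+1}-1}$: this is immediate when $(r_n)$ is monotone decreasing, but in the stated generality one needs a workaround, such as first passing to a subsequence along which $(r_n)$ is monotonically decreasing (possible because $r_n \to 0$) and noting that $\UU(\omega) \subseteq \liminf_j E_{n_j}$ for any subsequence $(n_j)$. The Borel--Cantelli inputs themselves, and the extraction of the subsequence $(K_j)$, are routine.
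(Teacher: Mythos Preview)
Your approach is essentially the paper's: both rest on a single Borel--Cantelli input showing that, almost surely, each newly drawn point $\omega_{n+1}$ is eventually at distance greater than $r_n + r_{n+1}$ from all earlier $\omega_k$ (your events $\mathcal{A}_n$ and $\mathcal{B}_{a,b}$ are minor variants of the paper's event $\{B_{n+1,n+1} \cap E_n \neq \emptyset\}$, with the same $O(n r_n)$ probability bound). The difference is only in the deterministic deduction: the paper observes directly that once this isolation holds for all $n \geq m$ one has $E_n \cap E_{n+1} = \bigcup_{k \leq n} B_{k,n+1}$, whence by induction $\bigcap_{n \geq p} E_n = \{\omega_1,\ldots,\omega_p\}$ for $p \geq m$; you instead argue by contradiction via a chain of close pairs.

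Two small points. First, your $\mathcal{A}_n$ input is never actually used; the contradiction comes entirely from the $\mathcal{B}_{a,b}$ events, so you can drop it. Second, the chain step is imprecise as written: if you enumerate \emph{all} values taken by $(k_n)$, there is no well-defined time at which the witness ``switches from $K_j$ to $K_{j+1}$'', since $(k_n)$ need not be monotone. The fix is to take $K_{j+1}$ to be the first \emph{record} value of $k_n$ exceeding $K_j$, occurring at time $t_{j+1}$; then $a := k_{t_{j+1}-1} \leq K_j < K_{j+1} =: b$, and the pair $(a,b)$ (rather than $(K_j,K_{j+1})$) witnesses $\mathcal{B}_{a,b}$, with the second coordinates $b = K_{j+1}$ distinct across $j$, yielding the desired infinitely many events. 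Both your argument and the paper's silently use monotonicity of $(r_n)$ at the comparison/inclusion step; your subsequence remark addresses this.
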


Finally, some estimations of the Hausdorff dimension of
$\UU(\omega)$ are obtained in the following two
theorems.


\begin{theorem} \label{the:dimupperbound}
  If $r_n = \frac{c}{n}$ and $0 < c < \frac{1}{2}$ then  almost surely
  \[
  \dimh \UU(\omega) \leq \inf_{\theta > 1} \frac{\log \Lambda}{\log
    \theta},
  \]
 where
  \[
  \Lambda = \frac{1 + \Theta + \Delta}{2} + \sqrt{ \Bigl( \frac{1
      + \Theta + \Delta}{2} \Bigr)^2 - \Delta},
  \]
  and
  \begin{align*}
    \Theta &= 2c (\theta - 1) (1 + \theta^{-2}), \\ \Delta &= 2c
    (\theta - 1) (\theta^{-1} - \theta^{-2}).
  \end{align*}
  
  In particular, since $\inf_{\theta > 1} \frac{\log
    \Lambda}{\log \theta}$ tends to $0$ as $c \to 0$, we conclude
  that $\dimh \UU(\omega) = 0$ almost surely when $r_n =
  1/n^\alpha$ with $\alpha > 1$.
\end{theorem}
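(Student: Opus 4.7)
The plan is to cover $\UU(\omega)$ at a geometric sequence of scales $n_k = \lceil \theta^k \rceil$ (for $\theta > 1$ a free parameter) by balls of radius $c/n_k$, and to control the number of such balls through a two-step linear recursion whose dominant root is the $\Lambda$ of the statement.

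Fix $\theta > 1$ and, for a large starting index $K$, introduce inductively the \emph{good} index set
\[
\mathcal{G}_k = \{\, i \leq n_k : \forall\, K \leq l \leq k,\ \exists\, m \leq n_l \text{ with } \|\omega_i - \omega_m\| < c/n_k + c/n_l \,\},\quad k \geq K.
\]
A triangle-inequality argument shows that $\UU(\omega) \cap E_{n_K} \subset \bigcup_{i \in \mathcal{G}_k} B(\omega_i, c/n_k)$, so the $s$-dimensional Hausdorff measure of $\UU(\omega) \cap E_{n_K}$ is at most $|\mathcal{G}_k|(2c/n_k)^s = |\mathcal{G}_k|(2c)^s \theta^{-ks}$. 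Since $\UU(\omega) = \bigcup_{K \geq 1} (\UU(\omega) \cap E_{n_K})$ and Hausdorff dimension is countably stable, it suffices to establish $|\mathcal{G}_k| \leq C_\omega \Lambda^k$ eventually almost surely.

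I would estimate $\E[|\mathcal{G}_k|]$ as follows. For every $i \leq n_k$, the level-$(k+1)$ condition is automatic (take $m = i$), so $\mathcal{G}_{k+1} \cap \{1, \dots, n_k\} = \mathcal{G}_k$; the only new contributions are fresh indices $i \in (n_k, n_{k+1}]$ whose $\omega_i$ is uniform and independent of $\mathcal{F}_k = \sigma(\omega_1, \dots, \omega_{n_k})$. The conditional expectation of the increment thus equals $(n_{k+1} - n_k)$ times $|\bigcap_{l=K}^{k} F_l|$, where $F_l = \bigcup_{m \le n_l} B(\omega_m, c/n_{k+1} + c/n_l)$. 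Decomposing this intersection by the birth level of the witnessing center $m$ and invoking the inductive cover at the two most recent scales $n_k$ and $n_{k-1}$, one arrives at an estimate of the form
\[
\E[|\mathcal{G}_{k+1}|] \leq (1+\Theta+\Delta)\, \E[|\mathcal{G}_k|] - \Delta\, \E[|\mathcal{G}_{k-1}|],
\]
with $\Theta$ and $\Delta$ precisely as in the statement: $\Theta$ encodes the first-order expected density of fresh centers close to an old one at scale $n_k$ (with a small $\theta^{-2}$ correction coming from centers inherited from scale $n_{k-1}$), while the subtracted $\Delta\,\E[|\mathcal{G}_{k-1}|]$ is an inclusion--exclusion correction for configurations where a fresh center is simultaneously close to nested old centers at both scales $n_k$ and $n_{k-1}$. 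The characteristic equation $x^2 - (1+\Theta+\Delta)x + \Delta = 0$ then has dominant root $\Lambda$, yielding $\E[|\mathcal{G}_k|] = O(\Lambda^k)$.

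Markov's inequality applied along the scales $k$ together with a Borel--Cantelli argument upgrades this to $|\mathcal{G}_k| \leq C_\omega \Lambda^k$ almost surely. The Hausdorff measure computation then gives $\dimh \UU(\omega) \leq \log\Lambda/\log\theta$ almost surely, and taking the infimum over $\theta > 1$ produces the stated bound; the case $r_n = 1/n^\alpha$ with $\alpha > 1$ follows by running the same argument with an effectively vanishing $c$, so $\Theta, \Delta \to 0$, $\Lambda \to 1$, and the bound collapses to $0$. The main obstacle is extracting the sharp two-step recursion: a naive one-step argument using only the most recent scale gives only the weaker growth rate $1 + \Theta + \Delta$, and it is precisely the careful geometric bookkeeping of centers surviving at two consecutive previous scales, avoiding overcounting, that delivers the sharper $\Lambda < 1+\Theta+\Delta$.
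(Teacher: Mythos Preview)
Your overall architecture matches the paper's: geometric scales $n_k\sim\theta^k$, an inductively controlled cover of the tail intersections by balls of radius $r_{n_k}$, a recursion on the expected number of balls, then Markov plus Borel--Cantelli. You are also right that a one-step recursion only yields growth $1+\Theta+\Delta$ and that something two-step is needed for $\Lambda$. The gap is in how you obtain the two-step recursion.

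First, a minor point: your claim $\mathcal{G}_{k+1}\cap\{1,\dots,n_k\}=\mathcal{G}_k$ is false, since the radii $c/n_{k+1}+c/n_l$ at level $k+1$ are strictly smaller than $c/n_k+c/n_l$; only $\subset$ holds (harmless for an upper bound). More seriously, your induction does not close: if $y$ lies in the fresh landing set $\bigcap_{l}F_l$ and $m\le n_k$ is the level-$k$ witness, the triangle inequality only gives $\|\omega_m-\omega_{m'}\|<c/n_k+c/n_l+2c/n_{k+1}$ for the level-$l$ witness $m'$, which exceeds the $\mathcal{G}_k$-radius $c/n_k+c/n_l$. Hence you cannot conclude $m\in\mathcal{G}_k$, and the landing set is not covered by the $\mathcal{G}_k$-balls at the intended radius. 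A closed-form index set like your $\mathcal{G}_k$ therefore does not self-reproduce under the step. Finally, the subtracted $-\Delta\,\E|\mathcal{G}_{k-1}|$ cannot come from inclusion--exclusion: subtracting pairwise intersections gives a \emph{lower} bound for the measure of a union, not an upper one, so the sign goes the wrong way for what you need.

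The paper avoids both issues by building the cover $H_i$ inductively rather than via a closed formula, and by splitting the new centers at step $i{+}1$ into \emph{permanent} ones (index set $I_{i+1}$, count $N_{i+1}$) landing within $r_{n_i}+r_{n_{i+2}}$ of an old center, and \emph{transient} ones ($J_{i+1}$, count $Q_{i+1}$) landing in the annulus between $r_{n_i}+r_{n_{i+2}}$ and $r_{n_i}+r_{n_{i+1}}$; a look-ahead shows the transient balls can be discarded at the next step. This yields the nonnegative matrix inequality
\[
\begin{bmatrix}\E N_{i+1}\\ \E Q_{i+1}\end{bmatrix}\le
\begin{bmatrix}1+\Theta&\Theta\\ \Delta&\Delta\end{bmatrix}
\begin{bmatrix}\E N_i\\ \E Q_i\end{bmatrix},
\]
whose dominant eigenvalue is $\Lambda$ and whose characteristic polynomial $x^2-(1+\Theta+\Delta)x+\Delta$ is exactly the one you wrote. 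So the ``$-\Delta$'' is a determinant, not an inclusion--exclusion correction; the missing ingredient in your argument is precisely this permanent/transient split with the look-ahead radius $r_{n_{i+2}}$.
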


\begin{theorem} \label{the:dimlowerbound}
  If $r_n = \frac{c}{n}$, then almost surely
  \[
  \dimh \UU(\omega) \geq 1 - \frac{- \log \bigl(1 - \exp \bigl(-2c
    \frac{\theta-1}{\theta^2} \bigr) \bigr)}{\log \theta}.
  \]

  In particular (let $\theta = 8.6$), if $r_n = \frac{1}{n}$,
  then almost surely
  \[
  \dimh \UU(\omega) \geq 0.2177444298485995.
  \]
\end{theorem}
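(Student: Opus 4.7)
The plan is to build a random Cantor subset of $\UU(\omega)$ calibrated to a geometric sequence of scales, and to estimate its Hausdorff dimension via a branching-process argument. Fix $\theta > 1$ and an auxiliary parameter $\eta \in (0,1)$ (to be sent to $1$ at the end). Set $N_j = \lceil \theta^j \rceil$. At level $j$, partition $\T$ into consecutive disjoint arcs of common length $L_j = 2(1-\eta) r_{N_{j+1}}$. The ratio $L_j / L_{j+1}$ tends to $\theta$, so each level-$j$ arc splits into roughly $\theta$ level-$(j+1)$ sub-arcs.

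The key definition is: an arc $I$ at level $j$ is \emph{alive} if there exists $k \in (N_{j-1}, N_j]$ with $I \subseteq B(\omega_k, r_{N_{j+1}})$. This definition has two essential features. First, if $I$ is alive, then $I \subseteq E_N$ for every $N \in [N_j, N_{j+1}]$: indeed $k \leq N_j \leq N$ and $\|\omega_k - y\| < r_{N_{j+1}} \leq r_N$ for all $y \in I$. Second, aliveness events at different levels depend on disjoint index blocks of the i.i.d.\ sequence $(\omega_k)$, hence are mutually independent across levels. Consequently, the random Cantor set $C = \bigcap_{j} \bigcup \{I : I \text{ alive at level } j\}$ is contained in $\UU(\omega)$.

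A direct computation gives the per-arc probability: a single $\omega_k$ covers $I$ with probability $2\eta r_{N_{j+1}}$, hence
\[
\Prob(I \text{ alive}) = 1 - (1 - 2\eta r_{N_{j+1}})^{N_j - N_{j-1}} \xrightarrow[j \to \infty]{} p(\eta) := 1 - \exp\Bigl( -\tfrac{2\eta c(\theta-1)}{\theta^2}\Bigr),
\]
since $2\eta r_{N_{j+1}}(N_j - N_{j-1}) \to 2\eta c(\theta-1)/\theta^2$. Combining cross-level independence with a second-moment / Paley--Zygmund estimate on the number $X_j$ of alive arcs at level $j$, and a mass-distribution (Frostman energy) argument to transfer count estimates into a Hausdorff-dimension lower bound for the limit Cantor set $C$, one shows that with positive probability,
\[
\dimh C \geq \frac{\log (\theta p(\eta))}{\log \theta} = 1 - \frac{-\log p(\eta)}{\log \theta}.
\]

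To conclude, the event $\{\dimh \UU(\omega) \geq s\}$ is tail-measurable: replacing finitely many $\omega_k$'s alters $\UU(\omega)$ by at most the (finite) set of displaced points, which has Hausdorff dimension zero. By Kolmogorov's $0/1$ law, the positive-probability lower bound becomes almost sure. Letting $\eta \nearrow 1$ along a countable sequence and using $p(\eta) \nearrow 1 - \exp(-2c(\theta-1)/\theta^2)$ yields the stated bound. The principal technical obstacle is the sibling dependence in the branching step: adjacent arcs at a common level have overlapping target zones $\{u \in \T : I \subseteq B(u, r_{N_{j+1}})\}$ when $\eta > 1/2$, so within-level aliveness events are positively correlated rather than independent. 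This requires an explicit variance/covariance estimate to make the Paley--Zygmund argument go through; because the correlations are local (only geometrically close arcs interact), they are controllable via a direct computation of pairwise covering probabilities.
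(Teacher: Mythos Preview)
Your approach is essentially correct and reaches the same bound, but the route is genuinely different from the paper's.

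The paper does not discretise into arcs or use a branching process. It works directly with the random unions $F_j=\bigcup_{k=n_{j-1}+1}^{n_j}B(\omega_k,r_{n_{j+1}})$ (the same index blocks you use), defines $\mu_{l,m}$ to be Lebesgue measure restricted to $\bigcap_{j=l}^m F_j$, and computes $\E\mu_{l,m}(\mathbbm{T})$, $\E\mu_{l,m}(\mathbbm{T})^2$ and $\E I_s(\mu_{l,m})$ explicitly via the cross-level independence you also exploit. A short potential-theoretic lemma (if $\theta=(R_s\eta)^{-1}\eta$ then $\theta(U)\le|U|^s$) converts the energy bound into a Frostman measure on $\bigcap_j F_j$, yielding $\dimh F\ge 1-s(c,\theta)$ with probability $\ge 1-2\varepsilon$; letting $\varepsilon\to 0$ (by increasing $l$) gives the almost-sure statement. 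Compared with your proposal, this avoids the discretisation parameter $\eta$, the rounding of the branching factor to $\theta$, and the within-level covariance bookkeeping---the only correlation estimate needed is the pairwise bound $\E\mathbbm{1}_{F_j}(x)\mathbbm{1}_{F_j}(y)$, which produces the kernel $\Psi_{l,m}(|x-y|)\le 1+C_l|x-y|^{-s(c,\theta)}$ directly. On the other hand, your Kolmogorov $0$--$1$ argument (changing finitely many $\omega_k$ alters $\UU(\omega)$ by a finite set) is a clean way to upgrade positive probability to almost-sure, and your framework makes the ``percolation'' intuition more visible.

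One caution about your sketch: the step ``transfer count estimates into a Hausdorff-dimension lower bound'' is where the real work sits. A Paley--Zygmund bound on the number $X_j$ of alive arcs only gives nonemptiness; to obtain $\dimh C\ge \log(\theta p)/\log\theta$ you still need a Frostman measure, and the natural candidate is (a normalisation of) Lebesgue on the union of surviving arcs---at which point you are effectively carrying out the paper's energy computation in discretised form. Your remark about positive within-level correlation is also slightly off: for arcs whose target zones are disjoint the aliveness events are in fact \emph{negatively} correlated (helpful for second moments), while only nearby arcs are positively correlated. Both cases are easily handled, but you should state this correctly when you fill in the details.
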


Below is an illustration of the dimension bounds provided by
Theorems~\ref{the:dimupperbound} and
\ref{the:dimlowerbound}. Note that there is no $c$ for which the
dimension is known to be intermediate, but at least
Theorems~\ref{the:dimupperbound} and \ref{the:dimlowerbound} show
that $r_n = \frac{c}{n}$ is the ``right'' quantity to look at, in the sense that for such sequences $(r_n)_{n\geq 1}$ there is a chance for the dimension to be intermediate.

\begin{center}
  \includegraphics{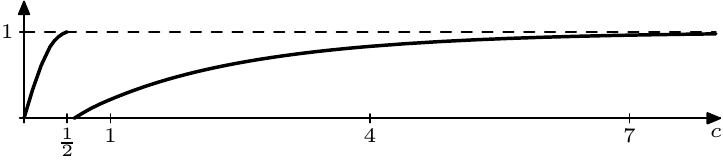}
\end{center}

The proofs of the above results are given in
Sections~\ref{sec:cover}--\ref{sec:dimension}.

\medskip
\section{Open questions and problems}

Our results do not give a complete picture of the size of the set
$E$. We list below some open questions and problems.

\begin{enumerate}
\item Is there a zero--one law for the event $\UU (\omega) =
  \mathbbm{T}$? What is a necessary and sufficient condition on
  $(r_n)_{n\geq 1}$ for $\Prob (\UU (\omega) = \mathbbm{T}) = 1$?
  
\item Is there a zero--one law for the Hausdorff dimension of
  $\UU(\omega)$? Is the probability $\Prob (\dimh
  \UU(\omega) = s)$ ($0\leq s \leq 1$) always equal to $0$ or $1$?

\item Give better dimension estimates of $\dimh \UU(\omega)$ when
  $r_n = \frac{c}{n}$. In particular, is there a value of $c$
  such that $\Prob (0 < \dimh \UU(\omega) < 1) > 0$?
\end{enumerate}

\medskip
\section{Proof of Theorem~\ref{the:cover} on uniform covering} \label{sec:cover}

In this section, we prove Theorem~\ref{the:cover} which gives
sufficient conditions for $\UU(\omega) =\mathbbm{T}$ to hold almost
surely.

By Shepp \cite[Formula~(90)]{Shepp}, we have
\begin{align*}
  \Prob (\mathbbm{T} \not \subset E_n) &\leq \frac{2 (1 -
    r_n)^{2n}}{\int_0^{r_n} (1 - r_n - t)^n \, \mathrm{d} t +
    (\frac{1}{4} - r_n) (1 - 2 r_n)^n} \\ & = \frac{2 (1 -
    r_n)^{2n}}{\frac{(1-r_n)^{n+1} - (1-2r_n)^{n+1}}{n+1} +
    (\frac{1}{4} - r_n) (1 - 2 r_n)^n}.
\end{align*}
For large enough $n$, we therefore have
\[
\Prob (\mathbbm{T} \not \subset E_n) \leq \frac{2 (1 -
  r_n)^{2n}}{\frac{1}{n+1}(1-r_n)^{n+1}} = 2 (n+1) (1-r_n)^{n-1}.
\]
Thus, if $\sum_{n=1}^\infty n (1-r_n)^{n}<\infty$, then $\Prob
(\mathbbm{T} \not \subset E_n)$ is summable. By Borel--Cantelli Lemma,
it follows that almost surely, $E_n = \mathbbm{T}$ for all but
finitely many $n$.
 
For the special case $r_n
= c \log n / n$ ($c > 2$), one can easily check that
$\sum_{n=1}^\infty n (1-r_n)^{n}<\infty$. The first part of the
theorem is thus proved.
 
For the second part, we deduce from Shepp\footnote{There is a misprint
  in (91) of \cite{Shepp}. ``$U(\alpha) \not \subset C$'' should be
  ``$C \not \subset U(\alpha)$''.} \cite[Formula~(91)]{Shepp} that
\begin{align*}
  \Prob (\mathbbm{T} \not \subset E_n) &\geq \frac{1}{2} \frac{(1
    - r_n)^{2n}}{\int_0^{r_n} (1 - r_n - t)^n \, \mathrm{d} t +
    (\frac{1}{2} - r_n) (1 - 2 r_n)^n} \\ & = \frac{1}{2}
  \frac{(1 - r_n)^{2n}}{\frac{(1-r_n)^{n+1} -
      (1-2r_n)^{n+1}}{n+1} + (\frac{1}{2} - r_n) (1 - 2 r_n)^n}.
\end{align*}
Hence
\[
\frac{1}{\Prob (\mathbbm{T} \not \subset E_n)} \leq 2 \frac{1}{(n+1)
  (1 - r_n)^{n-1}} + \frac{(1 - 2r_n)^n}{(1 - r_n)^{2n}}.
\]
Remark that we always have $\frac{(1 - 2r_n)^n}{(1 - r_n)^{2n}} \leq 1$. Thus
\[
\frac{1}{\Prob (\mathbbm{T} \not \subset E_n)} \leq 2 \frac{1}{(n+1)
  (1 - r_n)^{n-1}} + 1.
\]

Assuming that $r_n \to 0$, we obtain 
\[
\limsup_{n \to \infty} \Prob (\mathbbm{T} \not \subset E_n) \geq
\frac{1}{1 + \frac{2}{\liminf_{n \to \infty} n (1 - r_n)^n}}.
\]
Hence, if $\liminf\limits_{n \to \infty} n (1 - r_n)^n = p > 0$, then
\[
\Prob (\mathbbm{T} \not \subset \UU (\omega)) = \Prob (\limsup_{n
  \to \infty} \{ \mathbbm{T} \not \subset E_n \}) \geq \limsup_{n
  \to \infty} \Prob (\mathbbm{T} \not \subset E_n) \geq
  \frac{1}{1 + 2/p} > 0.
\]

If $r_n = \frac{c \log n}{n}$ with $c < 1$, then $\liminf\limits_{n \to
  \infty} n (1 - r_n)^n = \infty$. Hence $$\Prob (\mathbbm{T} \not \subset
\UU (\omega)) = 1.$$ Note also that with $c = 1$ we have $\liminf\limits_{n \to
  \infty} n (1 - r_n)^n = 1$. Hence $$\Prob (\mathbbm{T} \not \subset \UU
(\omega)) \geq \frac{1}{3}.$$

\section{Proof of Theorem~\ref{the:measure} on Lebesgue measure}

\subsection{Proof of the zero--one law}

The i.i.d.\ sequence $\omega=(\omega_1, \omega_2, \dots)$ can be
naturally modeled as an element in the probability space
$\Omega:=\mathbbm{T}^\mathbbm{N}$ with the $\sigma$-algebra being the
infinite product $\sigma$-algebra of the Borel $\sigma$-algebra
on $\mathbbm{T}$, and the probability $\Prob$ being the infinite
product of the Lebesgue measure on $\mathbbm{T}$. Then by defining
$T$ as the left shift on $\Omega$, we know that $T$ is an ergodic
transformation with respect to $\Prob$.

Note that for any $\omega\in \Omega$ and any point $y\in
\mathbbm{T}$, we have $y\in \UU(\omega)$ if any only if
\[
\forall N\gg 1, \ \exists n \leq N, \ \text{s.t.} \ \omega_n \in
B(y, r_N),
\]
or, equivalently, $\omega$ is in the following set
\[
\bigcup_{p=1}^\infty\bigcap_{n=p}^\infty \big\{\, \omega :
\{\omega_1, \dots, \omega_n\} \cap B(y, r_n)\neq \emptyset \,\big\}.
\]

For $y\in \mathbbm{T}$ and $(r_n)_{n\geq 1}$, let 
\[
B_n(y):= B(y, r_n)\times \T^{\mathbbm{N}}.
\]
Then $(B_n(y))_{n\geq 1}$ is a sequence of shrinking targets such
that $\Prob (B_n(y)) \to 0$ as $n \to \infty$. Further, $y\in
\UU(\omega)$ if any only if
\[
\forall n\gg 1, \ \exists k \leq n, \ \text{s.t.} \ T^{k-1}\omega
\in B_n (y),
\]
which is equivalent to 
\[
\omega \in \bigcup_{p=1}^\infty\bigcap_{n=p}^\infty
\bigcup_{k=0}^{n-1} T^{-k} B_n (y)=:\mathcal{E}_{\rm ah}(y).
\]
By \cite[Lemma~1]{KKP20}, for fixed $y$, the set $\EE(y)$ has
probability $1$ or $0$. Because of rotational invariance, $\Prob
(\EE (y))$ does not depend on $y$ but only on the sequence
$(r_n)_{n\geq 1}$. Hence, by Fubini's theorem, we have either  $\Prob
(\lambda (\UU (\omega)) = 1) = 1$ or $\Prob (\lambda
(\UU (\omega)) = 0) = 1$, which proves the first part of
Theorem~\ref{the:measure}.

\subsection{The condition on $\boldsymbol{(r_n)_{n\geq 1}}$}

That the condition \eqref{eq:galambos} in
Theorem~\ref{the:measure} is necessary and sufficient for
$\lambda (\UU (\omega)) = 1$ to hold almost surely follows from Theorem~4.3.1 of the book  \cite{Galambos} of Galambos. We present here a simplified statement. 

\begin{theorem}[Theorem 4.3.1 of Galambos \cite{Galambos}]
Let $X_1, X_2, \dots$ be a sequence of independent, identically distributed random variables with a nondegenerate, continuous distribution function $F$. Let $Z_n=\max \{X_1, X_2, \ldots, X_n\}$, and assume that sequences $(u_n)_{n\geq 1}$ and $(n(1-F(u_n)))_{n\geq 1}$ are both increasing. Then the probability
\begin{align}\label{formula-G}
\Prob(Z_n\le u_n \ \text{\rm for infinitely many } n)=0,
\end{align}
if and only if  
\[
\sum_{j=1}^\infty (1-F(u_j))=\infty \quad \text{and}\quad \sum _{j=1}^\infty (1-F(u_j))\exp(-j(1-F(u_j))<\infty. 
\]
\end{theorem}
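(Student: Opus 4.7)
\emph{Plan.} Write $A_n = \{Z_n \le u_n\}$ and $v_n = 1 - F(u_n)$, so that $v_n$ is non-increasing while $nv_n$ is non-decreasing; after the probability integral transform we may assume the $X_i$ are uniform on $[0,1]$. The condition $\sum v_j = \infty$ is necessary, because if it fails, the first Borel--Cantelli lemma applied to the independent events $\{X_j > u_j\}$ forces only finitely many of them to occur, and since $u_n$ tends to the essential supremum of $F$ we obtain $Z_n \le u_n$ eventually, giving $\Prob(Z_n \le u_n \text{ i.o.}) = 1$. Assuming $\sum v_j = \infty$, in particular $v_n \to 0$ (otherwise $(1-v_n)^n$ is already exponentially summable and both sides of the claimed equivalence are trivial).

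\emph{Sufficiency.} Suppose $\sum v_j = \infty$ and $\sum v_j e^{-j v_j} < \infty$. Introduce the ``escape'' events $B_j := A_j \cap A_{j+1}^c$. Because $Z_j \le u_j \le u_{j+1}$ on $A_j$, the complement $A_{j+1}^c$ reduces to $\{X_{j+1} > u_{j+1}\}$, which is independent of $A_j$ (the latter depending only on $X_1,\dots,X_j$); hence
\[
\Prob(B_j) = (1-v_j)^j v_{j+1} \le v_j e^{-j v_j}
\]
is summable by hypothesis. Borel--Cantelli then furnishes a random $N$ such that $A_j \subset A_{j+1}$ for every $j \ge N$, so that if $A_{j_0}$ occurred for some $j_0 \ge N$, inductively $A_j$ would hold for all $j \ge j_0$, implying $X_j \le u_j$ for all such $j$. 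This contradicts the second Borel--Cantelli lemma applied to $\{X_j > u_j\}$ under $\sum v_j = \infty$. Hence $A_j$ fails for every $j \ge N$, proving $\Prob(A_n \text{ i.o.}) = 0$.

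\emph{Necessity.} For the converse, assume $\sum v_j e^{-j v_j} = \infty$ (together with $\sum v_j = \infty$); the goal is to show $\Prob(A_n \text{ i.o.}) > 0$. I would apply the Kochen--Stone inequality to the events $A_{t_k}$ along a subsequence $(t_k)$. A direct computation, using independence across disjoint blocks of the $X_i$, gives for $j<k$
\[
\frac{\Prob(A_{t_j} \cap A_{t_k})}{\Prob(A_{t_j}) \Prob(A_{t_k})} = \frac{1}{(1-v_{t_k})^{t_j}}.
\]
Defining $(t_k)$ inductively by $t_{k+1} := \min\{ n > t_k : v_n \le 1/t_k\}$ (possible since $v_n \to 0$) gives $t_j v_{t_k} \le 1$ for all $j < k$, so the ratio above is bounded by an absolute constant $C$; Kochen--Stone then yields $\Prob(A_{t_k} \text{ i.o.}) \ge 1/C$ provided $\sum_k \Prob(A_{t_k}) = \infty$.

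\emph{Main obstacle.} The delicate step is translating the divergence of $\sum_j v_j e^{-j v_j}$ into divergence of $\sum_k e^{-t_k v_{t_k}} \sim \sum_k \Prob(A_{t_k})$ along this subsequence. Partitioning indices into blocks $[t_k, t_{k+1})$ and comparing $\sum_{n \in [t_k, t_{k+1})} v_n e^{-n v_n}$ with $(t_{k+1} - t_k) v_{t_k} e^{-t_k v_{t_k}}$, while using the monotonicity of $n v_n$ to control the oscillation of $e^{-n v_n}$ inside a block, should recover the desired equivalence; but balancing block widths against exponential weights at the exact threshold $\sum v_j e^{-j v_j}$ is the technical core of the argument, and possibly requires adjusting the definition of $(t_k)$ (e.g., by prescribing $\phi(t_k) := t_k v_{t_k}$ to grow along a tailored rate) so that the contributions of the blocks to the two sums match up.
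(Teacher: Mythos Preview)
The paper does not prove this theorem: it is quoted verbatim from Galambos's monograph as an external input and then applied to obtain Theorem~\ref{the:measure}(ii). There is therefore no proof in the paper to compare against, and I assess your proposal on its own terms.

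Your sufficiency argument is correct and clean. The observation that on $A_j$ one has $Z_j \le u_j \le u_{j+1}$, so that $A_j \cap A_{j+1}^c = A_j \cap \{X_{j+1} > u_{j+1}\}$ factors by independence, gives $\Prob(B_j) = (1-v_j)^j v_{j+1} \le v_j e^{-jv_j}$; Borel--Cantelli then forces the $A_j$ (for $j$ large) to be nested, and the second Borel--Cantelli lemma on $\{X_j>u_j\}$ rules out their eventual occurrence. The preliminary disposal of the case $\sum v_j < \infty$ is also fine.

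Your necessity argument has the right skeleton---Kochen--Stone along a sparse subsequence, with the correlation ratio $\Prob(A_{t_j}\cap A_{t_k})/\Prob(A_{t_j})\Prob(A_{t_k}) = (1-v_{t_k})^{-t_j}$ correctly computed, and your choice $t_{k+1} = \min\{n>t_k: v_n \le 1/t_k\}$ does give $t_j v_{t_k} \le 1$ for $j<k$. But the step you flag as the ``main obstacle'' is a genuine gap, not a routine detail. Your block estimate yields
\[
\sum_{n=t_k}^{t_{k+1}-1} v_n e^{-nv_n} \le (t_{k+1}-t_k)\, v_{t_k}\, e^{-t_k v_{t_k}},
\]
so to conclude $\sum_k e^{-t_k v_{t_k}} = \infty$ you would need $(t_{k+1}-t_k) v_{t_k}$ bounded; this does not follow from your definition of $(t_k)$ (which only gives $v_{t_k} \le 1/t_{k-1}$, with no control on $t_{k+1}-t_k$). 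Completing this direction requires a different choice of subsequence---for instance, thinning so that $t_k v_{t_k}$ increases in a controlled (e.g.\ dyadic) fashion, using the monotonicity of $n v_n$---together with a separate lemma matching the block sums. Until that is supplied, the necessity half remains a sketch rather than a proof.
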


We will now connect the quantities in our special case to the notation of Galambos. 

Fix a point $y \in \mathbbm{T}$. Let $X_n = |\omega_n - y|^{-1}$
and $u_n = r_n^{-1}$. The sequence of random variables $(X_n)_{n\geq 1}$ is
i.i.d.\ and $y \in B(\omega_k, r_n)$ if and only if $X_k > u_n$.

Notice that $Z_n > u_n$
if and only if there is a $k \leq n$ such that $y \in B(\omega_k,
r_n)$. Thus, $\omega_k \in \EE (y)$ if and only if $Z_n > u_n$
holds eventually (i.e.\ for all sufficiently large $n$). Hence,
$\omega_k \in \EE (y)$ if and only if $Z_n \leq u_n$ holds for at
most finitely many $n$.

We have $F(x) = \Prob (X_n < x) = \Prob (|\omega_n - y| > x^{-1})
= 1 - 2 x^{-1}$. By the above theorem of Galambos, we
have (\ref{formula-G}) holds  
 if and only if
\[
\sum_{n=1}^\infty \Prob (X_n > u_n) = \infty \quad \text{and}\quad\sum_{n=1}^\infty (1 - F(u_n)) \exp ( -n (1 - F(u_n))) <
\infty.
\]
This translates immediately to condition \eqref{eq:galambos} in
Theorem~\ref{the:measure}. Therefore, $\Prob (\EE (y)) =1
$ if and only if \eqref{eq:galambos} holds. Since this holds for
all $y$, it follows by Fubini's theorem that $\Prob (\lambda(\UU (\omega)) = 1) = 1$
if and only if \eqref{eq:galambos} holds.

\smallskip
Finally, we note that the zero--one law of
Theorem~\ref{the:measure} alternatively can also be deduced from \cite[Lemma 4.3.1]{Galambos} instead of referring to
\cite[Lemma~1]{KKP20}.

\medskip
\section{Proof of Theorem~\ref{the:countable}}

The probability that $B_{n+1,n+1}$ intersects no ball of
$B_{k,n}$ for $k \leq n$ is at least
\[
1-2n(r_n+r_{n+1}).
\]
Thus, the probability that $B_{n+1,n+1}$ intersects at least one
of the balls $B_{k,n}$ for some $k \leq n$ is at most $2n(r_n+
r_{n+1})$. Hence, if $\sum_{n=1}^\infty n r_n < \infty$, then almost surely,
there is an $m$ such that for all $n \geq m$ the ball
$B_{n+1,n+1}$ has empty intersection with all balls $B_{k,n}$
with $k \leq n$. Therefore, for all $n\geq m$,
\begin{align*}
  E_n \cap E_{n+1}&=\bigcup_{k=1}^n B_{k,n} \cap \left(
  \bigcup_{k=1}^n B_{k,n+1} \cup B_{n+1, n+1}\right)
  \\ &=\left(\bigcup_{k=1}^n B_{k,n} \cap \bigcup_{k=1}^n
  B_{k,n+1}\right) \cup \left(\bigcup_{k=1}^n B_{k,n} \cap
  B_{n+1, n+1}\right)\\ &=\bigcup_{k=1}^n B_{k,n+1}.
\end{align*}
Further, we have 
\[
\bigcap_{n=p}^\infty E_n=\{\omega_1, \dots, \omega_p\}, \quad \forall
p\geq m,
\]
which implies 
\[
\UU(\omega) =\bigcup_{p=1}^\infty\bigcap_{n=p}^\infty E_n= \{\,
\omega_k : k \in \mathbbm{N} \,\}.
\]

\medskip
\section{Proofs related to Hausdorff dimension} \label{sec:dimension}

In this section we prove the theorems on the estimations of the
Hausdorff dimension of the set $\UU(\omega)$.

\subsection{Proofs of upper bounds on Hausdorff dimension}

Before we give the proof of the upper bound of the Hausdorff
dimension which is found in Theorem~\ref{the:dimupperbound}, we
give a theorem with a somewhat weaker upper bound. We include the proof of
this theorem since it follows the same lines of thought as the more difficult proof of
Theorem~\ref{the:dimupperbound}, and might make the proof of Theorem \ref{the:dimupperbound} easier to read. 

\begin{theorem} \label{the:dimupperboundold}
  If $r_n = \frac{c}{n}$ and $0 < c < \frac{1}{2}$, then
  \[
  \dimh \UU(\omega)  \leq \inf_{\theta > 1} \frac{\log \bigl(1 + 2 c 
    \frac{\theta^2 - 1}{\theta} \bigr)}{\log \theta}
  \]
  almost surely.
\end{theorem}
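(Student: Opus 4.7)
The plan is to construct, for each $\theta > 1$, a branching-type random cover of $\UU(\omega)$ whose size grows exponentially at rate $\Lambda = 1 + 2c(\theta^2-1)/\theta$. Set $n_j = \lceil \theta^j \rceil$ and define random index sets $D_j \subseteq \{1, \ldots, n_j\}$ by $D_1 = \{1, \ldots, n_1\}$ and
\[
D_{j+1} = D_j \cup \bigl\{\,\ell \in (n_j, n_{j+1}] : \exists\, k \in D_j,\ \|\omega_\ell - \omega_k\| < r_{n_j} + r_{n_{j+1}}\,\bigr\}.
\]
The collection $\{B_{k,n_j} : k \in D_j\}$ is the candidate cover at stage $j$.

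The first ingredient is an induction on $j$ establishing the \emph{strong} covering property: every $\ell \leq n_j$ with $B_{\ell,n_j} \cap \bigcap_{i=1}^{j} E_{n_i} \neq \emptyset$ already lies in $D_j$. In the inductive step, given $y \in B_{\ell,n_{j+1}} \cap \bigcap_{i \leq j+1} E_{n_i}$: if $\ell \leq n_j$ then $y \in B_{\ell,n_{j+1}} \subseteq B_{\ell,n_j}$ and the hypothesis applied to $\ell$ itself places $\ell$ in $D_j \subseteq D_{j+1}$; if instead $\ell \in (n_j, n_{j+1}]$ then $y \in E_{n_j}$ supplies some $k \leq n_j$ with $y \in B_{k,n_j}$, whence $k \in D_j$ by hypothesis, and the triangle inequality gives $\|\omega_\ell - \omega_k\| < r_{n_j} + r_{n_{j+1}}$, so $\ell \in D_{j+1}$. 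Consequently $\bigcap_{i=1}^{j} E_{n_i} \subseteq \bigcup_{k \in D_j} B_{k,n_j}$.

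The second ingredient is the exponential bound on $\E|D_j|$. Conditioning on $\mathcal{F}_j = \sigma(\omega_1, \ldots, \omega_{n_j})$ and using that each $\omega_\ell$ with $\ell \in (n_j, n_{j+1}]$ is uniform on $\T$ and independent of $\mathcal{F}_j$, a union bound yields $\Prob(\ell \in D_{j+1} \mid \mathcal{F}_j) \leq 2(r_{n_j} + r_{n_{j+1}}) |D_j|$. Summing over $\ell$ and using $(n_{j+1}-n_j) \cdot 2(r_{n_j}+r_{n_{j+1}}) \to 2c(\theta-1)(1+\theta^{-1}) = 2c(\theta^2-1)/\theta$, one obtains
\[
\E[|D_{j+1}| \mid \mathcal{F}_j] \leq |D_j|\bigl(1 + 2c(\theta^2-1)/\theta + o(1)\bigr),
\]
so $\E|D_j| \leq C_\varepsilon(\Lambda + \varepsilon)^j$ for every $\varepsilon > 0$. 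Markov's inequality combined with Borel--Cantelli then produces $|D_j| \leq (\Lambda + 2\varepsilon)^j$ eventually almost surely.

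To conclude, for each fixed $p \in \mathbb{N}$, applying the same construction shifted to start from $D^{(p)}_{j_p} = \{1, \ldots, n_{j_p}\}$ with $j_p = \min\{j : n_j \geq p\}$ covers $\bigcap_{n \geq p} E_n$ by $O((\Lambda+\varepsilon)^j)$ balls of radius $r_{n_j} \asymp \theta^{-j}$, and the routine Hausdorff-measure estimate gives $\dimh \bigcap_{n \geq p} E_n \leq \log(\Lambda+2\varepsilon)/\log\theta$ almost surely. Since $\UU(\omega) = \bigcup_{p} \bigcap_{n \geq p} E_n$, countable stability of Hausdorff dimension transfers the bound to $\UU(\omega)$, and letting $\varepsilon \to 0$ and taking the infimum over $\theta > 1$ completes the proof. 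The delicate point I expect to be the main obstacle is the strong form of the covering claim: if one tried to prove only that $\bigcap_{i \leq j} E_{n_i}$ admits \emph{some} cover indexed by $D_j$, the induction would break in the old-index case $\ell \leq n_j$, and it is precisely the set-theoretic inclusion $\{k : B_{k,n_j} \cap \bigcap_{i \leq j} E_{n_i} \neq \emptyset\} \subseteq D_j$ that makes the recursion self-sustaining.
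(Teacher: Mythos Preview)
Your proposal is correct and follows essentially the same approach as the paper: both pick $n_j \approx \theta^j$, build index sets inductively, bound the expected growth by the factor $1 + 2c(\theta^2-1)/\theta$ via conditioning, and convert this to an almost-sure bound with Markov plus Borel--Cantelli before invoking countable stability of dimension. Your explicit formulation and inductive proof of the ``strong covering property'' (that every $\ell \le n_j$ whose ball meets $\bigcap_{i\le j} E_{n_i}$ already lies in $D_j$) is a clean way to justify the key recursion $|D_{j+1}| = |D_j| + \#\{\ell \in (n_j,n_{j+1}]:\ldots\}$; the paper asserts the analogous identity $N_{i+1}=N_i+M_{i+1}$ more tersely.
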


\begin{proof}
  Put $n_j = \theta^j$ and let $l > 0$.
  Consider the set
  \[
  G_{l,i} = \bigcap_{j=l}^{i} \bigcup_{k=1}^{n_j} B(\omega_k, r_{n_j}).
  \]
  We are going to construct inductively a cover of $G_{l,i}$ by
  $N_i$ balls $B(\omega_k, r_{n_i})$, where $k \in I_i$ and $I_i
  \subset \{1,2, \ldots, n_i \}$.

  For $i = l$, we let $I_l = \{1, 2, \ldots, n_l\}$. Suppose that
  $I_i$ has been defined. We define $I_{i+1}$ to consist of those
  $k \leq n_{i+1}$ such that $B(\omega_k, r_{n_{i+1}})$ intersects the
  set
  \[
  \hat{G}_i = \bigcup_{j \in I_i} B(\omega_j, r_{n_i}).
  \]
  
  Since $B(\omega_k, r_{n_{i+1}})$ is contained in $B(\omega_k,
  r_{n_{j}})$ for all $j \leq i$, we have
  \[
  N_{i+1} = N_i + M_{i+1},
  \]
  where $M_{i+1}$ is the number of $n_i < k \leq n_{i+1}$ such
  that the ball $B(\omega_k, r_{n_{i+1}})$ has non-empty intersection
  with $\hat{G}_i$.

  Let $(\hat{G}_i)_{(r)}$ denote the $r$-neighbourhood of
  $\hat{G}_i$. We then have
  \begin{equation} \label{eq:Mformula}
    M_{i+1} = \sum_{k = n_i + 1}^{n_{i+1}}
    \mathbbm{1}_{(\hat{G}_i)_{(r_{n_{i+1}})}} (\omega_k).
  \end{equation}

  Let $\varepsilon > 0$. The set $\hat{G}_i$ is a union of $N_i$
  balls of radius $r_{n_i}$. Hence, the Lebesgue measure of
  $(\hat{G}_i)_{(r_{n_{i+1}})}$ is at most $2 (r_{n_i} +
  r_{n_{i+1}}) N_i$.  It follows that
  \[
  \E ( M_{i+1} | \mathscr{S}_i ) \leq 2 (r_{n_i} + r_{n_{i+1}})
  N_i (n_{i+1} - n_i) = 2c \frac{\theta^2 - 1}{\theta} N_i,
  \]
  where $\mathscr{S}_i$ denotes the $\sigma$-algebra generated by
  $\omega_1, \omega_2, \ldots, \omega_{n_i}$. Hence
  \[
  \E M_{i+1} = \E ( \E ( M_{i+1} | \mathscr{S}_i )) \leq 2c
  \frac{\theta^2 - 1}{\theta} \E N_i.
  \]
  Since $N_{i+1} = N_i + M_{i+1}$, it follows that
  \[
  \E N_{i+1} \leq \Bigl( 1 + 2c \frac{\theta^2 - 1}{\theta}
  \Bigr) \E N_i.
  \]
  By induction,
  \[
  \E N_{i+1} \leq \Bigl( 1 + 2c \frac{\theta^2 - 1}{\theta}
  \Bigr)^{i+1-l} \E N_l = \Bigl( 1 + 2c \frac{\theta^2 -
    1}{\theta} \Bigr)^{i+1-l} n_l.
  \]

  By Markov's inequality,
  \[
  \Prob \{ N_{i+1} \geq u_i \E N_{i+1} \} \leq \frac{1}{u_i}.
  \]
  Letting $u_i = (1 + \varepsilon)^i$ for some $\varepsilon > 0$,
  we therefore have 
  \begin{align*}
    \Prob \Bigl\{ N_{i+1} \geq (1 + \varepsilon)^i \Bigl( 1 + 2c
    \frac{\theta^2 - 1}{\theta} \Bigr)^{i+1-l} n_l \Bigr\} &\leq
    \Prob \big\{ N_{i+1} \geq u_i \E N_{i+1} \big\} \\ &\leq (1 +
    \varepsilon)^{-i},
  \end{align*}
  which is summable over $i$. Hence, almost surely, there is an
  $i_0$ such that
  \begin{align*}
    N_{i+1} &\leq (1 + \varepsilon)^i \Bigl( 1 + 2c
    \frac{\theta^2 - 1}{\theta} \Bigr)^{i+1-l} n_l \\ &= C_l (1 +
    \varepsilon)^{i+1} \Bigl( 1 + 2c \frac{\theta^2 - 1}{\theta}
    \Bigr)^{i+1}
  \end{align*}
  holds for all $i \geq i_0$. We assume from now on that such an
  $i_0$ exists.
  
  With $i > i_0$, we may cover the set
  \[
  G_{n_l} = \bigcap_{j=n_l}^\infty \bigcup_{k = 1}^{n_j} B(\omega_k,
  r_{n_j})
  \]
  by $N_i$ balls of radius $r_{n_i}$. Hence
  \[
  \dimh G_{n_l} \leq \frac{\log \Bigl( (1 + \varepsilon) \bigl(1
    + 2 c \frac{\theta^2 - 1}{\theta} \bigr) \Bigr)}{\log
    \theta}.
  \]
  Since $E$ is contained in the union of the sets $G_{n_l}$, we
  have
  \[
  \dimh E \leq \frac{\log \Bigl( (1 + \varepsilon) \bigl(1 + 2 c
    \frac{\theta^2 - 1}{\theta} \bigr) \Bigr)}{\log \theta},
  \]
  and since $\varepsilon$ can be taken as small as we please, the
  theorem is proved.
\end{proof}

We now give the proof of Theorem~\ref{the:dimupperbound},
which contains a more careful but similar analysis as in the proof of
Theorem~\ref{the:dimupperboundold}.

\begin{proof}[Proof of Theorem~\ref{the:dimupperbound}]
  The proof is similar to that of
  Theorem~\ref{the:dimupperboundold}.  We let $n_j = \theta^j$
  and $l > 0$. As before, we construct inductively a cover of
  \[
  G_{l,i} = \bigcap_{j=l}^i \bigcup_{k = 1}^{n_j} B(\omega_k,
  r_{n_j}).
  \]
  The cover will consist of $N_i + Q_i$ balls $B (\omega_k,
  r_{n_i})$. We let $I_i, J_i \subset \{1,2,\ldots, n_i\}$ be
  such that $I_i \cap J_i = \emptyset$, there are $N_i$ balls
  $B(\omega_k,r_{n_i})$ with $k \in I_i$ and there are $Q_i$
  balls $B(\omega_k, r_{n_i})$ with $k \in J_i$. The construction
  of $I_i$ and $J_i$ is described below.

  We let $I_l = \{1, 2, \ldots, n_l\}$ and $J_l =
  \emptyset$. Hence $N_l = n_l$ and $Q_l = 0$. The set $G_{l,l}$
  is covered by the $N_l = n_l$ balls $B(\omega_k, r_{n_l})$,
  where $k \in I_l$.

  Suppose that the cover of $G_{l,i}$ is defined for some $i$,
  that is that
  \[
  G_{l,i} \subset \bigcup_{k \in I_i \cup J_i} B(\omega_k, r_{n_i}).
  \]
  The balls counted by $Q_i$ are balls $B(\omega_k, r_{n_i})$ such
  that $B(\omega_k, r_{n_{i+1}})$ can be discarded in the cover of
  $G_{l,i+1}$. The $N_i$ balls are balls such that $B(\omega_k,
  r_{n_{i+1}})$ is not discarded in the cover of $G_{l,i+1}$
  (regardless of whether they \emph{can} be discarded or
  not). What determines if $k \in I_{i+1}$ or $k \in J_{i+1}$ is
  described below.

  Consider first a $k \in \{1,2,\ldots, n_i\}$. If $k \in I_i$,
  then we let $k \in I_{i+1}$. Otherwise $k$ is not included in
  $I_i$ or $J_i$. This means that all $k \in J_i$ are discarded
  for the next step.

  We now consider a $k \in \{n_i+1, n_i+2,\ldots,n_{i+1}\}$. If
  $B(\omega_k,r_{n_{i+1}})$ intersects the set
  \[
  H_i = \bigcup_{k \in I_i \cup J_i} B(\omega_k, r_{n_i}),
  \]
  then we include $k$ in either $I_{i+1}$ or $J_{i+1}$. If
  $B(\omega_k,r_{n_{i+1}}) \cap H_i = \emptyset$, then $k$ is not
  included in any of $I_{i+1}$ or $J_{i+1}$.

  Suppose that $B(\omega_k,r_{n_{i+1}}) \cap H_i \neq
  \emptyset$. Then there exists an $l \in I_i \cup J_i$ such that
  $B(\omega_k,r_{n_{i+1}}) \cap B(\omega_l, r_{n_i}) \neq
  \emptyset$. Hence $|\omega_k - \omega_l| < r_{n_i} + r_{n_{i+1}}$.

  If $|\omega_k - \omega_l| > r_{n_{i}} + r_{n_{i+2}}$, then
  $B(\omega_k, r_{n_{i+2}}) \cap B(\omega_l, r_{n_{i}}) =
  \emptyset$. Suppose that $|\omega_k - \omega_l| > r_{n_{i}} +
  r_{n_{i+2}}$ holds for all $l\in I_i\cup J_i$. Then
  $B(\omega_k, r_{n_{i+2}})$ will have empty intersection with
  $H_i$ and it is therefore not necessary to include $k$ in any
  of $I_{i+2}$ or $J_{i+2}$. We therefore put $k$ in $J_{i+1}$ in
  this case.

  Finally, if $k$ satisfies $|\omega_k - \omega_l| < r_{n_{i}} +
  r_{n_{i+2}}$ for some $l$, then we include $k$ in $I_{i+1}$. In
  this way we obtain 
  \[
  H_{i+1} = \bigcup_{k \in I_i \cup J_i} B(\omega_k, r_{n_i}) \supset
  G_{l,i+1}
  \]
  and by induction $H_i \supset G_{l,i}$ for all $i$.
  
  As before, we let $\mathscr{S}_i$ denote the $\sigma$-algebra
  generated by $\omega_1, \omega_2, \ldots, \omega_{n_i}$. We get
  \[
  \left\{
  \begin{array}{lcl}
    \E (N_{i+1} | \mathscr{S}_i) & \leq & N_i + 2 (r_{n_i} +
    r_{n_{i+2}}) (n_{i+1} - n_i) (N_i + Q_i), \\ \E (Q_{i+1}|
    \mathscr{S}_i) & \leq & 2 (r_{n_{i+1}} - r_{n_{i+2}})
    (n_{i+1} - n_i) (N_i + Q_i).
  \end{array} \right.
  \]
  Hence
  \[
  \left\{
  \begin{array}{lcl}
    \E N_{i+1} & \leq & \E N_i + 2 (r_{n_i} + r_{n_{i+2}})
    (n_{i+1} - n_i) (\E N_i + \E Q_i), \\ \E Q_{i+1} & \leq & 2
    (r_{n_{i+1}} - r_{n_{i+2}}) (n_{i+1} - n_i) (\E N_i + \E
    Q_i).
  \end{array} \right.
  \]
   
  Letting
  \begin{align*}
    \Theta &= 2c (\theta - 1) (1 + \theta^{-2}),
    \\ \Delta &= 2c (\theta - 1) (\theta^{-1} - \theta^{-2}),
  \end{align*}
  we have
  \[
  \begin{bmatrix}
    \E N_{i+1} \\ \E Q_{i+1}
  \end{bmatrix} \le
  \begin{bmatrix}
    1 + \Theta & \Theta \\ \Delta &
    \Delta \end{bmatrix} \begin{bmatrix} \E N_i \\ \E Q_i
  \end{bmatrix}.
  \]
  The largest eigenvalue of the above square matrix is
  \[
  \Lambda = \frac{1 + \Theta + \Delta}{2} + \sqrt{ \Bigl( \frac{1
      + \Theta + \Delta}{2} \Bigr)^2 - \Delta},
  \]
  and we have $\E N_i + \E Q_i \leq C_0 \Lambda^i$ for some
  constant $C_0$. A similar argument as that in the proof of
  Theorem~\ref{the:dimupperboundold} gives that for all $\varepsilon >
  0$, almost surely, there exists a constant $C$ such that
  \[
  N_i + Q_i \leq C (1+\varepsilon)^i \Lambda^i.
  \]

  The set
  \[
  G_{n_l} = \bigcap_{j=n_l}^\infty \bigcup_{k = 1}^{n_j} B(\omega_k,
  r_{n_j})
  \]
  can be covered by the $N_i + Q_i$ balls of radius $r_{n_i}$. Hence
  \[
  \dimh G_{n_l} \leq \frac{\log (1 + \varepsilon) + \log
    \Lambda}{\log \theta}
  \]
  and since $\varepsilon$ can be taken as close to $0$ as we
  desire, we obtain $\dimh G_{n,l} \leq \frac{\log \Lambda}{\log
    \theta}$.  Which $\theta$ is the optimal choice depends on
  $c$.
\end{proof}

\subsection{Preparations to the proof of Theorem~\ref{the:dimlowerbound}}

We begin with the following theorem which is useful for the lower bound estimating of Hausdorff dimension.

Suppose $\eta$ is a Borel measure, and let $0 < s < 1$.  The Riesz
potential of $\eta$ is a function $R_s \eta \colon \mathbbm{T} \to
\mathbbm{R} \cup \{\infty\}$ defined by
\[
R_s \eta (x) = \int |x-y|^{-s} \, \mathrm{d}\eta (y).
\]

\begin{theorem} \label{lem:frostmanmeasure}
  Let $\eta$ be a finite Borel measure, and suppose $0 < s <
  1$. Then the Borel measure $\theta = (R_s \eta)^{-1} \eta$,
  defined by
  \[
  \theta (A) = \int_A (R_s \eta)^{-1} \, \mathrm{d} \eta,
  \]
  satisfies $\theta (U) \leq |U|^s$ for any Borel set $U$. ($|U|$
  denotes the diameter of $U$.)
\end{theorem}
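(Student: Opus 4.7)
The proof is short and should proceed by a direct pointwise estimate on the Riesz potential restricted to $U$. The plan is as follows.

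First I would handle the degenerate case $\eta(U)=0$: then by definition
\[
\theta(U) = \int_U (R_s\eta)^{-1}\, \mathrm{d}\eta = 0 \leq |U|^s,
\]
regardless of the value of the integrand (where we adopt the convention that $(R_s\eta)^{-1}=0$ wherever $R_s\eta=\infty$). So we may assume $\eta(U)>0$.

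The key observation is a uniform lower bound for $R_s\eta$ on $U$. For any $x\in U$ and any $y\in U$ we have $|x-y|\leq |U|$, hence $|x-y|^{-s}\geq |U|^{-s}$. Restricting the integral defining $R_s\eta(x)$ to $U$, this gives
\[
R_s\eta(x) \geq \int_U |x-y|^{-s}\, \mathrm{d}\eta(y) \geq |U|^{-s}\,\eta(U) \qquad \text{for every } x\in U.
\]
In particular $R_s\eta(x)>0$ on $U$, so $(R_s\eta(x))^{-1}\leq |U|^s/\eta(U)$ for $x\in U$.

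Integrating this pointwise inequality against $\eta$ over $U$ yields
\[
\theta(U) = \int_U (R_s\eta(x))^{-1}\, \mathrm{d}\eta(x) \leq \frac{|U|^s}{\eta(U)}\cdot \eta(U) = |U|^s,
\]
which is the desired conclusion. There is no serious obstacle: the only subtlety is the measurability of $R_s\eta$ (standard, since $(x,y)\mapsto |x-y|^{-s}$ is jointly measurable and Tonelli applies to the nonnegative integrand) and the bookkeeping in the degenerate case $\eta(U)=0$, both of which are routine.
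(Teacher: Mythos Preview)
Your argument is correct. Both you and the paper begin by disposing of the case $\eta(U)=0$ and then restrict the Riesz potential to $U$, using that $R_s\eta(x)\ge \int_U |x-y|^{-s}\,\mathrm{d}\eta(y)$ for $x\in U$. The paper then normalises $\eta|_U$ to a probability measure and applies Jensen's inequality to turn $\bigl(\int_U |x-y|^{-s}\,\mathrm{d}\eta(y)/\eta(U)\bigr)^{-1}$ into $\int_U |x-y|^{s}\,\mathrm{d}\eta(y)/\eta(U)$, after which the bound $|x-y|^s\le |U|^s$ finishes. You instead apply the diameter bound \emph{inside} the inner integral, as $|x-y|^{-s}\ge |U|^{-s}$, obtaining directly $R_s\eta(x)\ge |U|^{-s}\eta(U)$ and hence a uniform upper bound on $(R_s\eta)^{-1}$ over $U$. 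Your route is slightly more elementary, bypassing Jensen entirely; the paper's Jensen step is the device that makes the argument transfer to the reference \cite{Persson}, but for the present statement your direct estimate is cleaner and equally rigorous.
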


\begin{proof}
  The proof follows the proof of Lemma~1.1 (or Lemma~5.1) in
  Persson \cite{Persson}.
  
  It is clear that $\theta$ is a Borel measure.

  We may assume that $\eta (U) > 0$, since otherwise, there is
  nothing to prove. It now follows that
  \begin{align*}
    \theta (U) &= \int_U \biggl( \int |x-y|^{-s} \, \mathrm{d}
    \eta (y) \biggr)^{-1} \, \mathrm{d} \eta (x) \\ &\leq \int_U
    \biggl( \int_U |x-y|^{-s} \, \mathrm{d} \eta (y) \biggr)^{-1}
    \, \mathrm{d} \eta (x) \\ & = \int_U \biggl( \int_U
    |x-y|^{-s} \, \frac{\mathrm{d} \eta (y)}{\eta (U)}
    \biggr)^{-1} \, \frac{\mathrm{d} \eta (x)}{\eta (U)} \\ &\leq
    \int_U \int_U |x-y|^{s} \, \frac{\mathrm{d} \eta (y)}{\eta
      (U)} \, \frac{\mathrm{d} \eta (x)}{\eta (U)} \leq |U|^s,
  \end{align*}
  where we have made use of Jensen's inequality.
\end{proof}

Throughout the proof of Theorem~\ref{the:dimlowerbound}, we will
assume that the balls $B_{k,n}$ are closed. This makes certain
arguments in the proof a bit simpler, and it does not change the
Hausdorff dimenstion of $\mathcal{U} (\omega)$.

We will consider a subset of $\UU(\omega)$. Let $n_j$ be a strictly
increasing sequence of integers. Put
\[
F_j = \bigcup_{k=n_{j-1} + 1}^{n_j} B_{k,n_{j+1}}.
\]
Then $F_j$ is compact and we have $F := \liminf F_j \subset
\UU(\omega)$. To see this, note that for every $n$ with $n_j < n
\leq n_{j+1}$, we have $F_j \subset E_n$, so that
\[
\bigcap_{j=j_0}^\infty F_j \subset \bigcap_{n=m}^\infty E_n
\]
holds when $n_{j_0} < m$.

We define measures $\mu_{l,m}$ on $\mathbbm{T}$ by
\[
\frac{\mathrm{d} \mu_{l,m}}{\mathrm{d} x} = \prod_{j=l}^m
\mathbbm{1}_{F_j}
\]
where $\mathbbm{1}_{F_j}$ denotes the indicator function of
$F_j$. Hence $\mu_{l,m}$ has support in
\[
\bigcap_{j=l}^m F_j.
\]
In fact, $\mu_{l,m}$ is the restriction of Lebesgue measure to
$\bigcap_{j=l}^m F_j$.

Suppose that $\{U_i\}$ is an open cover of $F$. Then $\{U_i\}$ is
an open cover of
\[
\bigcap_{j=l}^\infty F_j
\]
for any $l$. Since the sets $F_j$ are compact, there exists for
each $l$ an $m$, such that $\{U_i\}$ covers
\[
\bigcap_{j=l}^m F_j.
\]
We shall therefore investigate the typical behaviour of
$\mu_{l,m}$, aiming to use Theorem~\ref{lem:frostmanmeasure}.

We start with the following three lemmata.

\begin{lemma} \label{lem:tech}
  Let $x, y \in \mathbbm{T}$. Then
  \[
  \E \mathbbm{1}_{B_{k,n}} (x) \mathbbm{1}_{B_{k,n}} (y) = \int
  \mathbbm{1}_{B_{k,n}} (x) \mathbbm{1}_{B_{k,n}} (y) \,
  \mathrm{d} \Prob \leq 2 r_n \mathbbm{1}_{B (0, 2r_n)}
  (|x-y|).
  \]
\end{lemma}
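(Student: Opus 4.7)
The plan is to unwind the definitions and reduce the claim to a trivial statement about the Lebesgue measure of an intersection of two balls on $\mathbbm{T}$. Since $B_{k,n} = B(\omega_k, r_n)$ and the metric on $\mathbbm{T}$ is symmetric, I would first rewrite
\[
\mathbbm{1}_{B_{k,n}}(x)\,\mathbbm{1}_{B_{k,n}}(y) = \mathbbm{1}_{B(x,r_n)}(\omega_k)\,\mathbbm{1}_{B(y,r_n)}(\omega_k) = \mathbbm{1}_{B(x,r_n)\cap B(y,r_n)}(\omega_k),
\]
so that the whole question is about the indicator of a single event depending on the random point $\omega_k$.

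Next, since $\omega_k$ is uniformly distributed on $\mathbbm{T}$ (i.e., distributed according to Lebesgue measure $\lambda$), the expectation is simply
\[
\E\,\mathbbm{1}_{B(x,r_n)\cap B(y,r_n)}(\omega_k) = \lambda\bigl(B(x,r_n)\cap B(y,r_n)\bigr).
\]
Two balls of radius $r_n$ on $\mathbbm{T}$ have nonempty intersection if and only if the distance between their centres is strictly less than $2r_n$, which gives the factor $\mathbbm{1}_{B(0,2r_n)}(|x-y|)$ on the right-hand side. When the intersection is nonempty, its Lebesgue measure is at most the measure of a single ball, which is $2r_n$ (and in fact equals $\max(0, 2r_n - |x-y|)$, but the cruder bound is all we need). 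Combining the two cases yields the stated inequality.

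There is essentially no obstacle here: the lemma is a direct calculation using the uniform distribution of $\omega_k$ and the elementary geometry of intervals on the circle. The only minor point to be careful about is ensuring that the distance on $\mathbbm{T}$ behaves correctly so that $\omega_k \in B(x,r_n) \iff x \in B(\omega_k, r_n)$, which follows from symmetry of the metric $\|\cdot\|$. No probabilistic or measure-theoretic subtlety is involved.
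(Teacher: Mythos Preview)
Your argument is correct and is essentially identical to the paper's: both compute the expectation as $\lambda\bigl(B(x,r_n)\cap B(y,r_n)\bigr)$, which equals $0$ if $|x-y|\ge 2r_n$ and $2r_n-|x-y|$ otherwise, and then bound this by $2r_n\,\mathbbm{1}_{B(0,2r_n)}(|x-y|)$. The paper simply states the exact two-case formula without spelling out the symmetry step you made explicit.
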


\begin{proof}
  We have
  \[
  \int \mathbbm{1}_{B_{k,n}} (x) \mathbbm{1}_{B_{k,n}} (y) \,
  \mathrm{d} \Prob = \left\{ \begin{array}{ll} 0 & \text{if
    } |x-y| \geq 2 r_n \\ 2r_n - |x-y| & \text{if } |x-y| <
    2r_n \end{array} \right.,
  \]
  which proves the lemma.
\end{proof}

\begin{lemma} \label{lem:correlation}
  Let $x, y \in \mathbbm{T}$. Then
  \[
  \E (1 - \mathbbm{1}_{B_{k,n}} (x))(1 - \mathbbm{1}_{B_{k,n}}
  (y)) \leq 1 - 4 r_n + 2 r_n \mathbbm{1}_{B (0, 2r_n)} (|x-y|).
  \]
\end{lemma}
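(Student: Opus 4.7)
The plan is to reduce the statement to Lemma~\ref{lem:tech} by expanding the product and using linearity of expectation. Writing
\[
(1 - \mathbbm{1}_{B_{k,n}}(x))(1 - \mathbbm{1}_{B_{k,n}}(y)) = 1 - \mathbbm{1}_{B_{k,n}}(x) - \mathbbm{1}_{B_{k,n}}(y) + \mathbbm{1}_{B_{k,n}}(x)\mathbbm{1}_{B_{k,n}}(y),
\]
I would take expectations term by term. The first term contributes $1$. For each of the two single-indicator terms, the event $\{x \in B_{k,n}\} = \{\omega_k \in B(x, r_n)\}$ has probability $2 r_n$ since $\omega_k$ is uniformly distributed on $\mathbb{T}$ and $B(x, r_n)$ is an interval of length $2 r_n$; this gives a contribution of $-4 r_n$. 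Finally, the product term is bounded above by $2 r_n \mathbbm{1}_{B(0, 2 r_n)} (|x-y|)$ directly from Lemma~\ref{lem:tech}. Summing these three contributions yields exactly the claimed inequality.

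Since each step is a one-line computation and Lemma~\ref{lem:tech} does the only nontrivial work, I do not anticipate a genuine obstacle here; the only small point to be careful about is that the single-indicator expectations are exact equalities ($=2r_n$), so the inequality in the conclusion really comes only from the application of Lemma~\ref{lem:tech} to the correlation term.
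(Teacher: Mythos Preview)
Your proof is correct and is essentially identical to the paper's own argument: expand the product, use linearity of expectation, compute $\E \mathbbm{1}_{B_{k,n}}(x)=\E \mathbbm{1}_{B_{k,n}}(y)=2r_n$, and then apply Lemma~\ref{lem:tech} to the cross term.
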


\begin{proof}
  We have
  \begin{align*}
   & \E (1 - \mathbbm{1}_{B_{k,n}} (x))(1 - \mathbbm{1}_{B_{k,n}}
    (y)) \\ =& 1 - \E \mathbbm{1}_{B_{k,n}} (x) - \E
    \mathbbm{1}_{B_{k,n}} (y) + \E \mathbbm{1}_{B_{k,n}} (x)
    \mathbbm{1}_{B_{k,n}} (y) \\ =& 1 - 4 r_n + \E
    \mathbbm{1}_{B_{k,n}} (x) \mathbbm{1}_{B_{k,n}} (y),
  \end{align*}
  and the estimate follows from Lemma~\ref{lem:tech}.
\end{proof}

\begin{lemma} \label{lem:Psi}
  Let
  \[
  \Psi_{l,m} (t) = \prod_{j=l}^m \biggl( 1 + \frac{(1 - 2
    r_{n_{j+1}})^{n_j-n_{j-1}} }{1 - (1 - 2 r_{n_{j+1}})^{n_j -
      n_{j-1}}} \mathbbm{1}_{B(0,r_{n_{j+1}})} (t)\biggr).
  \]
  If $n_j = \theta^j$ and $r_n = \frac{c}{n}$, then $\Psi_{l,m}
  (t) \leq 1 + C_l |t|^{-s(c,\theta)}$, where
  \[
  C_l = c^s \Bigl( 1 - \exp \Bigl(-2c \frac{\theta-1}{\theta^2}
  \Bigr) \Bigr)^l
  \]
  and
  \[
  s (c,\theta) = \frac{- \log \bigl(1 - \exp \bigl(-2c
    \frac{\theta-1}{\theta^2} \bigr) \bigr)}{\log \theta}.
  \]
\end{lemma}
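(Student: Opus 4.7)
The plan is to exploit the fact that each factor in the product defining $\Psi_{l,m}(t)$ is binary: it equals $1$ when the indicator vanishes, and otherwise equals $(1 - a_j)^{-1}$, where I write $a_j := (1 - 2 r_{n_{j+1}})^{n_j - n_{j-1}}$. The non-trivial factors are exactly those with index $j$ satisfying $|t| \leq r_{n_{j+1}} = c/\theta^{j+1}$. The task therefore reduces to (i) a uniform upper bound on each non-trivial factor and (ii) a count of how many such indices $j \in \{l, \ldots, m\}$ appear.

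For (i), using $n_j - n_{j-1} = \theta^{j-1}(\theta-1)$ and the inequality $\log(1 - x) \leq -x$ (valid for $0 < x < 1$), I obtain
\[
\log a_j \leq -\frac{2c}{\theta^{j+1}} \cdot \theta^{j-1} (\theta - 1) = -\frac{2c (\theta - 1)}{\theta^2},
\]
so $a_j \leq \alpha := \exp(-2c(\theta-1)/\theta^2)$ uniformly in $j$. Hence $(1 - a_j)^{-1} \leq (1 - \alpha)^{-1}$, and by the defining relation $s(c,\theta) = -\log(1-\alpha)/\log\theta$ in the statement, $(1 - \alpha)^{-1} = \theta^{s(c,\theta)}$.

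For (ii), let $j^{*}(t)$ be the largest integer $j$ with $|t| \leq c/\theta^{j+1}$, equivalently $j^{*}(t) + 1 \leq \log_\theta(c/|t|)$. If $j^{*}(t) < l$, every factor of the product is $1$ and $\Psi_{l,m}(t) = 1$, so the claimed inequality is trivial. Otherwise the number of non-trivial factors is at most $j^{*}(t) - l + 1$, so combining with (i),
\[
\Psi_{l,m}(t) \leq \theta^{s(j^{*}(t) - l + 1)} \leq \theta^{-sl} (c/|t|)^s = c^s (1-\alpha)^l |t|^{-s} = C_l |t|^{-s}.
\]
Putting the two cases together gives $\Psi_{l,m}(t) \leq \max(1, C_l |t|^{-s}) \leq 1 + C_l |t|^{-s}$.

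The main technical point is step (i): the geometric cancellation $\theta^{j-1}/\theta^{j+1} = \theta^{-2}$ is what turns the \emph{a priori} $j$-dependent quantity $a_j$ into a $j$-independent bound $\alpha$, and this is precisely what makes $s(c,\theta)$ the correct exponent. The only minor caveat is that the step $\log(1 - 2 r_{n_{j+1}}) \leq -2 r_{n_{j+1}}$ requires $2c/\theta^{j+1} < 1$; this holds for all but finitely many $j$ for any fixed $c$ and $\theta$, and the finitely many remaining terms can be absorbed into the constant $C_l$, or avoided by restricting to $l$ large enough, as will in any case be done in the intended application to Theorem~\ref{the:dimlowerbound}.
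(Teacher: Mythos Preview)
Your proof is correct and follows essentially the same approach as the paper's: both split according to the largest index $j_0$ (your $j^*(t)$) for which the indicator is nonzero, bound each nontrivial factor uniformly by $(1-\alpha)^{-1}=\theta^{s(c,\theta)}$, and convert the count $j_0-l+1$ into the power $|t|^{-s}$ via $j_0 \lesssim \log_\theta(c/|t|)$. The only cosmetic difference is that the paper invokes the monotonicity of $x\mapsto(1-1/x)^x$ where you use $\log(1-x)\le -x$, which yields the same bound under the same hypothesis $2c/\theta^{j+1}<1$.
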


\begin{proof}
  Take $t > 0$. If $t > r_{n_{l+1}}$, then $\Psi_{l,m} (t) = 1$.
  Otherwise, there is a $j_0 > l$ such that $r_{n_{j_0+1}}
  \leq t < r_{n_{j_0}}$. Then
  \begin{align*}
    \Psi_{l,m} (t) &= \prod_{j=l}^{j_0} \biggl( 1 + \frac{(1 - 2
      r_{n_{j+1}})^{n_j-n_{j-1}} }{1 - (1 - 2 r_{n_{j+1}})^{n_j -
        n_{j-1}}} \biggr) \\ &= \prod_{j=l}^{j_0} \biggl(
    \frac{1}{1 - (1 - 2 r_{n_{j+1}})^{n_j - n_{j-1}}} \biggr).
  \end{align*}
  With $n_j = \theta^j$ and $r_n = \frac{c}{n}$, we get
  \[
  \Psi_{l,m} (t) \leq \prod_{j=l}^{j_0} \Biggl(\frac{1}{ 1 - (1 -
    2c \theta^{- (j+1)})^{\theta^{j+1} (\theta - 1) \theta^{-2}}
  } \Biggr).
  \]    
  Since $x \mapsto (1-1/x)^x$ is increasing, we get
  \[
    \Psi_{l,m} (t) \leq \prod_{j=l}^{j_0} \Biggl(\frac{1}{1 -
      \exp \bigl(-2c \frac{\theta-1}{\theta^2} \bigr)} \Biggr) =
    C_l t^{-s(c,\theta)},
  \]
  where $C_l = c^s \bigl( 1 - \exp \bigl(-2c
  \frac{\theta-1}{\theta^2} \bigr) \bigr)^l$ and
  \[
  s (c,\theta) = \frac{- \log \bigl(1 - \exp \bigl(-2c
    \frac{\theta-1}{\theta^2} \bigr) \bigr)}{\log \theta}.
  \]

  Finally, we have $\Psi_{l,m} (t) \leq 1 + C_l
  |t|^{-s(c,\theta)}$, regardless of whether $t > r_{n_{l+1}}$ or
  not.
\end{proof}

We let
\[
K_{l,m} = \prod_{j=l}^m \bigl( 1 - (1 - 2 r_{n_{j+1}})^{n_j - n_{j-1}}
\bigr).
\]
These numbers will appear several times in our computations.

\begin{proposition} \label{prop:expectations}
  We have
  \[
  \E (\mu_{l,m} (\mathbbm{T})) = K_{l,m} \quad \text{and} \quad \E
  (\mu_{l,m} (\mathbbm{T})^2) \leq K_{l,m}^2 \int_\mathbbm{T}
  \int_\mathbbm{T} \Psi_{l,m} (|x-y|) \, \mathrm{d}x \mathrm{d}y.
  \]
\end{proposition}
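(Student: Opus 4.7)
The plan is to exploit the block structure built into the sets $F_j$: since $F_j$ depends only on the random variables $\omega_{n_{j-1}+1}, \ldots, \omega_{n_j}$, the indicators $\mathbbm{1}_{F_j}$ for different $j$ are mutually independent. Together with Fubini's theorem, this reduces the moments of $\mu_{l,m}(\mathbbm{T}) = \int_{\mathbbm{T}} \prod_{j=l}^m \mathbbm{1}_{F_j}(x) \, \mathrm{d}x$ to products of expectations involving only a single block.

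For the first moment, I would swap expectation with the integral over $x$ and apply independence to obtain $\E \mu_{l,m}(\mathbbm{T}) = \int_{\mathbbm{T}} \prod_{j=l}^m \Prob(x \in F_j) \, \mathrm{d}x$. Using that each $\omega_k$ is uniform on $\mathbbm{T}$ gives directly $\Prob(x \in F_j) = 1 - (1 - 2r_{n_{j+1}})^{n_j - n_{j-1}}$, and since the integrand is constant in $x$, the integral evaluates to $K_{l,m}$.

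For the second moment, the same exchange and factorisation applied to the double integral $\int \int \prod_{j=l}^m \mathbbm{1}_{F_j}(x)\mathbbm{1}_{F_j}(y) \, \mathrm{d}x\, \mathrm{d}y$ reduces the problem to bounding $\Prob(x, y \in F_j)$ for fixed $x, y$. I would use inclusion--exclusion to write
\[
\Prob(x, y \in F_j) = 1 - 2 a_j + \Prob(x, y \notin F_j),
\]
where $a_j = (1-2r_{n_{j+1}})^{n_j - n_{j-1}}$, and then use independence of the balls within a single block to obtain
\[
\Prob(x, y \notin F_j) = \bigl[ \E (1 - \mathbbm{1}_{B_{k,n_{j+1}}}(x))(1 - \mathbbm{1}_{B_{k,n_{j+1}}}(y)) \bigr]^{n_j - n_{j-1}},
\]
which Lemma~\ref{lem:correlation} bounds. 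Splitting on whether $|x-y|$ lies inside $B(0, 2 r_{n_{j+1}})$ or not, the case with indicator $0$ yields $\Prob(x, y \in F_j) \leq (1-a_j)^2$ (using $(1-2r)^2 \geq 1-4r$, so $(1-4r_{n_{j+1}})^{n_j - n_{j-1}} \leq a_j^2$), while the case with indicator $1$ yields $\Prob(x, y \in F_j) \leq 1 - a_j$. Both are captured by the single multiplicative inequality
\[
\Prob(x, y \in F_j) \leq (1-a_j)^2 \Bigl( 1 + \frac{a_j}{1 - a_j} \mathbbm{1}_{B(0, 2r_{n_{j+1}})}(|x-y|) \Bigr).
\]
Taking the product over $j$, the prefactors assemble into $K_{l,m}^2$ and the remaining factors form $\Psi_{l,m}(|x-y|)$, after which integrating over $\mathbbm{T}^2$ completes the proof.

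The main obstacle I anticipate is the algebraic tidying in the second-moment computation: specifically, unifying the two case bounds into a single multiplicative expression of the precise form $(1-a_j)^2 (1 + c_j \mathbbm{1})$ that matches the factors defining $\Psi_{l,m}$. Once this factorisation is in hand, the remainder is routine Fubini, independence, and substitution of the definitions of $K_{l,m}$ and $\Psi_{l,m}$.
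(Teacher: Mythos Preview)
Your proposal is correct and follows essentially the same route as the paper: Fubini plus block independence for the first moment, and for the second moment the expansion $\mathbbm{1}_{F_j}(x)\mathbbm{1}_{F_j}(y)=1-\mathbbm{1}_{\complement F_j}(x)-\mathbbm{1}_{\complement F_j}(y)+\mathbbm{1}_{\complement F_j}(x)\mathbbm{1}_{\complement F_j}(y)$, Lemma~\ref{lem:correlation} applied within each block, the same case split on $\mathbbm{1}_{B(0,2r_{n_{j+1}})}(|x-y|)$, and the same unification into the factor $(1-a_j)^2\bigl(1+\tfrac{a_j}{1-a_j}\mathbbm{1}\bigr)$ before taking the product and integrating.
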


\begin{proof}
  Since the intervals $[n_{j-1} + 1, n_j]$, which appear as
  summation intervals in the union
  \[
  F_j = \bigcup_{k=n_{j-1}+1}^{n_j} B_{k,n_{j+1}},
  \]
  are disjoint, the sets $F_j$ are pairwise independent, and we have
  \[
  \E (\mu_{l,m} (\mathbbm{T})) = \prod_{j=l}^m \int
  \int_{\mathbbm{T}} \mathbbm{1}_{F_j} (x) \, \mathrm{d}x \,
  \mathrm{d} \Prob = \prod_{j=l}^m \int_{\mathbbm{T}} \int
  \mathbbm{1}_{F_j} (x) \, \mathrm{d} \Prob \, \mathrm{d} x.
  \]

  We compute $\int \mathbbm{1}_{\complement F_j} \, \mathrm{d}
  \Prob$. By independence, we have
  \begin{align*}
    \int \mathbbm{1}_{\complement F_j} \, \mathrm{d} \Prob & =
    \int \prod_{k = n_{j-1} + 1}^{n_j} (1 -
    \mathbbm{1}_{B_{k,n_{j+1}}} ) \, \mathrm{d} \Prob\\
    & = \prod_{k
      = n_{j-1} + 1}^{n_j} \int (1 - \mathbbm{1}_{B_{k,n_{j+1}}}
    ) \, \mathrm{d} \Prob \\ &= \prod_{k = n_{j-1} + 1}^{n_j} (1
    - 2r_{n_{j+1}}) = (1 - 2 r_{n_{j+1}})^{n_j - n_{j-1}}.
  \end{align*}
  We then have
  \[
  \int \mathbbm{1}_{F_j} \, \mathrm{d} \Prob = 1 - (1
  - 2 r_{n_{j+1}})^{n_j - n_{j-1}}
  \]
  and
  \[
  \E (\mu_{l,m} (\mathbbm{T})) = \prod_{j=l}^m \bigl( 1 - (1 - 2
  r_{n_{j+1}})^{n_j - n_{j-1}} \bigr) = K_{l,m}.
  \]
  
  We now estimate $\E (\mu_{l,m} (\mathbbm{T})^2)$. By
  Lemma~\ref{lem:correlation}, we have
  \begin{align*}
    \int \mathbbm{1}_{\complement F_j} (x)
    \mathbbm{1}_{\complement F_j} (y) \, \mathrm{d} \Prob &=
    \prod_{k=n_{j-1}+1}^{n_j} \int (1 -
    \mathbbm{1}_{B_{k,n_{j+1}}} (x)) (1 -
    \mathbbm{1}_{B_{k,n_{j+1}}} (y)) \, \mathrm{d} \Prob \\ &\leq
    \bigl(1 - 4 r_{n_{j+1}} + 2 r_{n_{j+1}} \mathbbm{1}_{B(0, 2
      r_{n_{j+1}})} (|x - y|) \bigr)^{n_j - n_{j-1}} \\ &=:
    \Phi_j (|x-y|).
  \end{align*}
  Using this estimate, we have
  \begin{align*}
    \E (\mu_{l,m} &(\mathbbm{T})^2) \\ &= \int \biggl(
    \int_{\mathbbm{T}} \int_{\mathbbm{T}} \prod_{j=l}^m
    \mathbbm{1}_{F_j} (x) \mathbbm{1}_{F_j} (y) \, \mathrm{d} x
    \mathrm{d} y \biggr) \, \mathrm{d} \Prob \\ &=
    \int_{\mathbbm{T}} \int_{\mathbbm{T}} \prod_{j=l}^m \int (1 -
    \mathbbm{1}_{\complement F_j} (x)) (1 -
    \mathbbm{1}_{\complement F_j} (y)) \, \mathrm{d} \Prob \,
    \mathrm{d}x \mathrm{d}y \\ &= \int_{\mathbbm{T}}
    \int_{\mathbbm{T}} \prod_{j=l}^m \int \bigl( 1 -
    \mathbbm{1}_{\complement F_j} (x) - \mathbbm{1}_{\complement
      F_j} (y) + \mathbbm{1}_{\complement F_j} (x)
    \mathbbm{1}_{\complement F_j} (y) \bigr) \, \mathrm{d} \Prob
    \, \mathrm{d}x \mathrm{d}y \\ &\leq \int_{\mathbbm{T}}
    \int_{\mathbbm{T}} \prod_{j=l}^m \bigl( 1 - 2 (1 -
    2r_{n_{j+1}})^{n_j - n_{j-1}} + \Phi_j (|x-y|) \bigr) \,
    \mathrm{d}x \mathrm{d}y.
  \end{align*}
  We consider the factor $1 - 2 (1 - 2r_{n_{j+1}})^{n_j - n_{j-1}} +
  \Phi_j (|x-y|)$. If
  \[
  \mathbbm{1}_{B(0, 2 r_{n_{j+1}})} (|x - y|) = 0,
  \]
  then
  \begin{align*}
    1 &- 2 (1 - 2r_{n_{j+1}})^{n_j - n_{j-1}} + \Phi_j (|x-y|)
    \\ &= 1 - 2 (1 - 2r_{n_{j+1}})^{n_j - n_{j-1}} + (1 - 4
    r_{n_{j+1}})^{n_j-n_{j-1}} \\ &\leq 1 - 2 (1 -
    2r_{n_{j+1}})^{n_j - n_{j-1}} + (1 - 2
    r_{n_{j+1}})^{2(n_j-n_{j-1})} \\ &= \bigl(1 - (1 - 2
    r_{n_{j+1}})^{n_j-n_{j-1}} \bigr)^2.
  \end{align*}
  Similarly, if $\mathbbm{1}_{B(0, 2 r_{n_{j+1}})} (|x - y|) = 1$, then
  \begin{align*}
    1 &- 2 (1 - 2r_{n_{j+1}})^{n_j - n_{j-1}} + \Phi_j (|x-y|)
    \\ &= 1 - 2 (1 - 2r_{n_{j+1}})^{n_j - n_{j-1}} + (1 - 2
    r_{n_{j+1}})^{n_j-n_{j-1}} \\ &= 1 - (1 - 2
    r_{n_{j+1}})^{n_j-n_{j-1}}.
  \end{align*}
  Hence, we have
  \begin{multline*}
  \frac{1 - 2 (1 - 2r_{n_{j+1}})^{n_j - n_{j-1}} + \Phi_j
    (|x-y|)}{ \bigl(1 - (1 - 2 r_{n_{j+1}})^{n_j-n_{j-1}}
    \bigr)^2} \\ = \biggl(1 + \frac{(1 - 2
    r_{n_{j+1}})^{n_j-n_{j-1}}}{1 - (1 - 2
    r_{n_{j+1}})^{n_j-n_{j-1}}} \mathbbm{1}_{B(0, 2 r_{n_{j+1}})}
  (|x - y|) \biggr).
  \end{multline*}
  
  With
  \[
  \Psi_{l,m} (|x-y|) = \prod_{j=l}^m \biggl(1 + \frac{(1 - 2
    r_{n_{j+1}})^{n_j-n_{j-1}}}{1 - (1 - 2
    r_{n_{j+1}})^{n_j-n_{j-1}}} \mathbbm{1}_{B(0, 2 r_{n_{j+1}})}
  (|x - y|) \biggr)
  \]
  we therefore have
  \begin{align*}
    \E (\mu_{l,m} (\mathbbm{T})^2) & \leq K_{l,m}^2
    \int_\mathbbm{T} \int_\mathbbm{T} \Psi_{l,m} (|x-y|) \,
    \mathrm{d}x \mathrm{d}y. \qedhere
  \end{align*}
\end{proof}
  
\begin{proposition}  \label{prop:measures}
  Let $\varepsilon > 0$, $\delta > 0$, $\theta > 1$ and $n_j =
  \theta^j$. If $r_n = \frac{c}{n}$ with
  \[
  c > - \frac{1}{2} \frac{\theta^2}{\theta - 1} \log \Bigl(1 -
  \frac{1}{\theta} \Bigr),
  \]
  then we have
  \[
  \delta \E \mu_{l,m} (\mathbbm{T}) < \mu_{l,m} (\mathbbm{T}) <
  (2 - \delta) \E \mu_{l,m} (\mathbbm{T})
  \]
  with probability at least $1 - \varepsilon$ if $l$ is large enough.
\end{proposition}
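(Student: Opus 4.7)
The plan is to deduce the two-sided concentration estimate from a second moment (Paley--Zygmund / Chebyshev) argument, using the bounds already packaged in Proposition~\ref{prop:expectations}. Since $\E \mu_{l,m} (\mathbbm{T}) = K_{l,m}$ and $\E \mu_{l,m} (\mathbbm{T})^2 \leq K_{l,m}^2 \int_\mathbbm{T} \int_\mathbbm{T} \Psi_{l,m} (|x-y|) \, \mathrm{d}x\mathrm{d}y$, the variance satisfies
\[
\mathrm{Var}(\mu_{l,m} (\mathbbm{T})) \leq K_{l,m}^2 \Bigl( \int_\mathbbm{T} \int_\mathbbm{T} \Psi_{l,m} (|x-y|) \, \mathrm{d}x\mathrm{d}y - 1 \Bigr).
\]

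Next, I would note that the desired double-sided inequality $\delta \E \mu_{l,m}(\mathbbm{T}) < \mu_{l,m}(\mathbbm{T}) < (2-\delta) \E \mu_{l,m}(\mathbbm{T})$ is exactly the event $|\mu_{l,m}(\mathbbm{T}) - K_{l,m}| < (1-\delta) K_{l,m}$, so Chebyshev's inequality gives
\[
\Prob\bigl( |\mu_{l,m}(\mathbbm{T}) - K_{l,m}| \geq (1-\delta) K_{l,m} \bigr) \leq \frac{1}{(1-\delta)^2} \Bigl( \int_\mathbbm{T}\int_\mathbbm{T} \Psi_{l,m}(|x-y|) \, \mathrm{d}x\mathrm{d}y - 1 \Bigr).
\]
It therefore suffices to prove that the double integral tends to $1$ as $l \to \infty$, uniformly in $m \geq l$, since then one can make the right hand side smaller than $\varepsilon$ by taking $l$ large enough.

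The key estimate comes from Lemma~\ref{lem:Psi}: under the assumption $r_n = c/n$ and $n_j = \theta^j$, we have $\Psi_{l,m}(t) \leq 1 + C_l |t|^{-s(c,\theta)}$ with $C_l = c^s \bigl(1 - \exp(-2c(\theta-1)/\theta^2)\bigr)^l$ and $s(c,\theta) = -\log(1-\exp(-2c(\theta-1)/\theta^2))/\log \theta$. A direct calculation shows that the hypothesis
\[
c > -\frac{1}{2} \frac{\theta^2}{\theta-1} \log\Bigl(1 - \frac{1}{\theta}\Bigr)
\]
is equivalent to $s(c,\theta) < 1$, which is precisely the condition guaranteeing that the Riesz-type integral $I := \int_\mathbbm{T}\int_\mathbbm{T} |x-y|^{-s(c,\theta)} \, \mathrm{d}x \mathrm{d}y$ is finite. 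This equivalence, together with the geometric decay $C_l \to 0$ (since the base $1 - \exp(-2c(\theta-1)/\theta^2)$ lies strictly in $(0,1)$), is the heart of the argument.

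Combining these inputs yields
\[
\int_\mathbbm{T}\int_\mathbbm{T} \Psi_{l,m}(|x-y|) \, \mathrm{d}x\mathrm{d}y - 1 \leq C_l \cdot I \longrightarrow 0 \quad (l \to \infty),
\]
uniformly in $m$. Choosing $l$ so large that $C_l \cdot I \leq \varepsilon (1-\delta)^2$ completes the proof. The only real obstacle is verifying the precise matching between the threshold on $c$ and the condition $s(c,\theta)<1$; once that equivalence is in place, the rest is a routine Chebyshev estimate.
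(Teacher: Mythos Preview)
Your proof is correct and follows essentially the same route as the paper's: bound the variance of $\mu_{l,m}(\mathbbm{T})$ via Proposition~\ref{prop:expectations}, apply Chebyshev with threshold $(1-\delta)\E\mu_{l,m}(\mathbbm{T})$, and use Lemma~\ref{lem:Psi} together with the equivalence between the hypothesis on $c$ and $s(c,\theta)<1$ to conclude that $C_l \int_\mathbbm{T}\int_\mathbbm{T}|x-y|^{-s(c,\theta)}\,\mathrm{d}x\,\mathrm{d}y \to 0$ as $l\to\infty$. The paper's argument is identical up to notation (it writes the bound as $D_l/(1-\delta)^2$ with $D_l$ equal to your $C_l \cdot I$).
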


\begin{proof}
  The assumption on $c$ implies that $s (c,\theta) < 1$.
  
  Using Lemma~\ref{lem:Psi} and Markov's inequality, we obtain
  \begin{align*}
    \Prob (|\mu_{l,m} (\mathbbm{T}) - \E \mu_{l,m}
    (\mathbbm{T})| \geq a) &\leq \frac{\E (\mu_{l,m}
      (\mathbbm{T}) - \E \mu_{l,m} (\mathbbm{T}))^2}{a^2} \\ &=
    \frac{\E ( \mu_{l,m} (\mathbbm{T})^2) - (\E \mu_{l,m}
      (\mathbbm{T}))^2}{a^2} \\ & \leq D_l \frac{(\E \mu_{l,m}
      (\mathbbm{T}) )^2}{a^2},
  \end{align*}
  where
  \[
  D_l := C_l \int_\mathbbm{T} \int_\mathbbm{T}
  |x-y|^{-s(c,\theta)} \, \mathrm{d}x \mathrm{d}y < \infty
  \]
  since $s (c,\theta) < 1$.
  
  With $a = (1 - \delta) \E \mu_{l,m} (\mathbbm{T})$, we get
  \[
  \Prob \Bigl( \delta < \frac{\mu_{l,m} (\mathbbm{T})}{\E
    \mu_{l,m} (\mathbbm{T})} < (2-\delta) \Bigr) \geq 1 -
  \frac{D_l}{(1- \delta)^2}.
  \]
  By Lemma~\ref{lem:Psi}, we see that $C_l \to 0$ and hence $D_l
  \to 0$ as $l \to \infty$. This finishes the proof.
\end{proof}

For $0 < s < 1$, we define the $s$-dimensional Riesz energy of a
measure $\mu$ by
\[
I_s (\mu) = \iint |x-y|^{-s} \, \mathrm{d}\mu (x) \mathrm{d}\mu(y).
\]
We let
\[
J_s = \int_\mathbbm{T} \int_\mathbbm{T} |x-y|^{-s} \, \mathrm{d}x
\mathrm{d}y = 2 \int_0^\frac{1}{2} t^{-s} \, \mathrm{d}t =
\frac{2^{2-s}}{1-s}.
\]

\begin{proposition} \label{prop:energy}
  Let $\theta > 1$ and $n_j = \theta^j$. If $r_n = \frac{c}{n}$
  with
  \[
  c > - \frac{1}{2} \frac{\theta^2}{\theta - 1} \log \Bigl(1 -
  \frac{1}{\theta} \Bigr),
  \]
  then
  \[
  \E I_s (\mu_{l,m}) \leq K_{l,m}^2 C_l J_{s+s(c,\theta)} <
  \infty.
  \]
\end{proposition}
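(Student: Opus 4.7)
The plan is to expand the expected Riesz energy as an iterated integral over $\mathbbm{T}\times\mathbbm{T}$, swap expectation and integration via Fubini, and then recycle the correlation bound established in Proposition~\ref{prop:expectations}. Since $\mu_{l,m}$ is the restriction of Lebesgue measure to $\bigcap_{j=l}^m F_j$, one has
\[
\E I_s(\mu_{l,m}) = \iint_{\mathbbm{T}\times\mathbbm{T}} |x-y|^{-s}\, \E\Bigl[\prod_{j=l}^m \mathbbm{1}_{F_j}(x)\mathbbm{1}_{F_j}(y)\Bigr]\, \mathrm{d}x\, \mathrm{d}y.
\]
The pairwise disjointness of the index ranges $\{n_{j-1}+1,\ldots,n_j\}$ makes the sets $F_l,\ldots,F_m$ jointly independent, so the expectation of the product factors as a product of pair-correlation expectations.

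For each factor I would invoke the bound already derived in the proof of Proposition~\ref{prop:expectations} via Lemma~\ref{lem:correlation}, which after splitting on whether $|x-y| < 2r_{n_{j+1}}$ yields
\[
\prod_{j=l}^m \E\bigl[\mathbbm{1}_{F_j}(x)\mathbbm{1}_{F_j}(y)\bigr] \le K_{l,m}^2\, \Psi_{l,m}(|x-y|).
\]
Inserting this into the previous display and applying Lemma~\ref{lem:Psi} to estimate $\Psi_{l,m}(t) \le 1 + C_l |t|^{-s(c,\theta)}$ reduces the problem to bounding $\iint_{\mathbbm{T}\times\mathbbm{T}} |x-y|^{-s}\bigl(1 + C_l|x-y|^{-s(c,\theta)}\bigr)\, \mathrm{d}x\, \mathrm{d}y$. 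On the compact torus this is controlled (after absorbing the constant term into the singular one, using $|x-y|^{-s} \le 2^{-s(c,\theta)} |x-y|^{-s-s(c,\theta)}$) by a constant multiple of $J_{s+s(c,\theta)}$, giving the claimed bound $K_{l,m}^2 C_l J_{s+s(c,\theta)}$.

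The only point needing verification is the finiteness of $J_{s+s(c,\theta)}$, which requires $s + s(c,\theta) < 1$. A direct computation shows the hypothesis $c > -\tfrac{1}{2}\tfrac{\theta^2}{\theta-1}\log(1 - 1/\theta)$ is equivalent to $s(c,\theta) < 1$, leaving a nontrivial range of admissible $s$. I do not anticipate a serious obstacle: all the heavy probabilistic lifting has already been done in the second moment computation of Proposition~\ref{prop:expectations}, and the present statement amounts to repeating that calculation with an extra factor $|x-y|^{-s}$ inside the double integral.
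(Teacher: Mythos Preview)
Your proposal is correct and follows essentially the same approach as the paper: expand $\E I_s(\mu_{l,m})$ as a double integral over $\mathbbm{T}\times\mathbbm{T}$, use Fubini and the independence of the $F_j$ to factor, invoke the correlation estimate from Proposition~\ref{prop:expectations} to obtain the bound $K_{l,m}^2\,\Psi_{l,m}(|x-y|)$, and then apply Lemma~\ref{lem:Psi}. The paper's proof is terser (it simply says ``following the same steps as in the estimation of $\E(\mu_{l,m}(\mathbbm{T})^2)$''), but the substance is identical; your handling of the additive $1$ in the bound $\Psi_{l,m}(t)\le 1+C_l|t|^{-s(c,\theta)}$ is in fact slightly more careful than the paper's, which passes directly to $K_{l,m}^2 C_l J_{s+s(c,\theta)}$ without comment.
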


\begin{proof}
  Following the same steps as in the estimation of $\E (\mu_{l,m}
  (\mathbbm{T})^2)$ in the proof of
  Proposition~\ref{prop:expectations}, we obtain
  \begin{align*}
    \E (I_s (\mu_{l,m})) &\leq \int_\mathbbm{T} \int_\mathbbm{T}
    |x-y|^{-s} \Psi_{l,m} (|x-y|) \, \mathrm{d}x \mathrm{d}y \\ &
    \leq K_{l,m}^2 C_l \int_\mathbbm{T} \int_\mathbbm{T}
    |x-y|^{-s-s(c,\theta)} \, \mathrm{d}x \mathrm{d}y = K_{l,m}^2
    C_l J_{s+s(c,\theta)}.
  \end{align*}
  The assumption on $c$ implies that $J_{s+s(c,\theta)}$ is finite.
\end{proof}

\subsection{Proof of Theorem~\ref{the:dimlowerbound}}

Assume that $\theta$ satisfies
\[
c > - \frac{1}{2} \frac{\theta^2}{\theta - 1} \log \Bigl(1 -
\frac{1}{\theta} \Bigr).
\]
Then $s(c,\theta) < 1$ and we may choose an $s \in (0, 1 -
s(c,\theta))$. Let $0 < \varepsilon < \frac{1}{2}$ and $\delta >
0$. We let $n_j = \theta^j$.

Our choice of $s$ implies that $J_{s+s(c,\theta)}$ is finite
since $s + s(c,\theta) < 1$. Using Markov's inequality and
Proposition~\ref{prop:energy}, we have
\[
\Prob (I_s (\mu_{l,m}) \geq a (\E \mu_{l,m})^2 ) \leq
\frac{\E I_s (\mu_{l,m})}{a (\E \mu_{l,m})^2} \leq
\frac{C_l J_{s+s(c,\theta)}}{a}.
\]
Take $a$ such that $\frac{C_l J_{s+s(c,\theta)}}{a} <
\varepsilon$. Then
\[
\Prob (I_s (\mu_{l,m}) < a (\E \mu_{l,m})^2 ) \geq 1 -
\varepsilon.
\]
Taking a large $l$, we deduce from Proposition~\ref{prop:measures}
that with probability at least $1 - 2 \varepsilon$,
\[
I_s (\mu_{l,m}) < a (\E \mu_{l,m})^2 \qquad \text{and} \qquad
\delta < \frac{\mu_{l,m} (\mathbbm{T})}{\E \mu_{l,m}
  (\mathbbm{T})} < 2 - \delta.
\]
Hence, with probability at least $1 - 2 \varepsilon$, for
each fixed $m > l$,
\[
I_s (\mu_{l,m}) < \frac{a}{\delta^2} (\mu_{l,m} (\mathbbm{T}))^2.
\]
We cannot guarantee that this holds with positive probability for
all $m > l$, but it follows that with probability at least $1 - 2
\varepsilon$, there is a sequence $(m_j)_{j=1}^\infty$, such that
for any $j$
\[
I_s (\mu_{l,m_j}) < \frac{a}{\delta^2} (\mu_{l,m_j} (\mathbbm{T}))^2.
\]
Suppose that $(\omega_k)_{k=1}^\infty$ is such that there exists
such a sequence $(m_i)$. We normalise $\mu_{l,m_i}$ by defining
the probability measure
\[
\nu_{l,m_i} = \frac{\mu_{l,m_i}}{\mu_{l,m_i} (\mathbbm{T})}.
\]
Then we may define measures $\theta_{l,m_i}$ by
\[
\frac{\mathrm{d} \theta_{l,m_i}}{\mathrm{d} \nu_{l,m_i}} = (R_s
\nu_{l,m_i})^{-1},
\]
where $R_s \nu_{l,m_i}$ is the $s$ dimensional Riesz potential.
By Theorem~\ref{lem:frostmanmeasure}
\[
\theta_{l,m_i} (U) \leq |U|^s,
\]
where $|U|$ denotes the diameter of $U$. By Jensen's inequality
we have
\[
\theta_{l,m_i} (\mathbbm{T}) \geq (I_s (\nu_{l,m_i}))^{-1} =
\frac{\mu_{l,m_i} (\mathbbm{T})^2}{I_s (\mu_{l,m_i})} \geq
\frac{\delta^2}{a}.
\]

Suppose now that $\{U_k\}$ is an open cover of $F$. Then
$\{U_k\}$ covers
\[
\bigcap_{j=l}^\infty F_j,
\]
for any $l$, and in particular for the large enough $l$ chosen
above. Since $F_j$ are compact, there is an $i$ such that
\[
\bigcup_k U_k \supset \bigcap_{j=l}^{m_i} F_j.
\]
Since ${U_k}$ covers the support of $\theta_{l,m_j}$, it follows that
\[
\sum_k |U_k|^s \geq \sum_k \theta_{l,m_i} (U_k) \geq
\theta_{l,m_i} (\mathbbm{T}) \geq \frac{\delta^2}{a}.
\]

The above proves that the $s$-dimensional Hausdorff measure of
$\bigcap F_j$ is at least $\delta^2 a^{-1}$ and in particular,
$\dimh F \geq s$ holds with probability at least $1 - 2
\varepsilon$. Since $s$ can be taken as close to $1 -
s(c,\theta)$ as we please, we therefore have proved that $\dimh F \geq 1
- s(c,\theta)$ holds with probability at least $1 - 2
\varepsilon$.

Since $\varepsilon > 0$ is arbitrary, by the fact that $\UU(\omega)
\supset F$, we deduce that
\[
\dimh \UU (\omega) \geq 1 - s(c,\theta) = 1 - \frac{- \log
  \bigl(1 - \exp \bigl(-2c \frac{\theta-1}{\theta^2} \bigr)
  \bigr)}{\log \theta}
\]
with probability $1$.


\begin{thebibliography}{00}

\bibitem{Bi65} P. Billard, \emph{S\'eries de Fourier
  al\'eatoirement born\'ees, continues, uniform\'ement
  convergentes}, Ann. Sci. \'Ecole Norm. Sup.  (4), 1965, 82:
  131--179.

\bibitem{Bu03} Y. Bugeaud, \emph{A note on inhomogeneous
  diophantine approximation}, Glasg. Math. J. 45 (2003):
  105--110.

\bibitem{Dr56} A. Dvoretzky, \emph{On covering a circle by
  randomly placed arcs}, Proc. Natl. Acad. Sci. USA 
  42 (1956), 199--203.

\bibitem{Er61} P. Erd\H{o}s, \emph{Some unsolved problems},
  Publ. Math. Inst. Hung. Acad. Sci., Ser. A 6, (1961) 221--254.

\bibitem{FW04} A.-H. Fan and J. Wu, \emph{On the covering by
  small random intervals}, Ann. Inst. H. Poincar\'e
  Probab. Statist. 40(1):125--131, 2004.

\bibitem{Galambos} J. Galambos, \emph{The Asymptotic Theory of
  Extreme Order Statistics}, John Wiley and Sons, 1978.
  
\bibitem{Ka59} J.-P. Kahane, \emph{Sur le recouvrement d'un
  cercle par des arcs dispos\'es au hasard},
  C. R. Acad. Sci. Paris 248 (1959), 184--186.

\bibitem{Ka85} J.-P. Kahane, Some Random Series of Functions,
  Cambridge University Press, 1985.

\bibitem{Ka00} J.-P. Kahane, \emph{Random coverings and
  multiplicative processes}, Fractal geometry and stochastics, II
  (Greifswald/Koserow, 1998), 125--146, Progr. Probab., 46,
  Birkh\"auser, Basel, 2000.


\bibitem{KL19} D. H. Kim and L. Liao, \emph{Dirichlet uniformly
  well-approximated numbers}, Int. Math. Res. Not. IMRN 
  24 (2019), 7691--7732.
  
\bibitem{KKP20}  M. Kirsebom, Ph. Kunde, T. Persson, \emph{Shrinking targets and eventually always hitting points for interval maps}, Nonlinearity 33 (2020), no. 2, 892--914.

\bibitem{Ma72} B. B. Mandelbrot, \emph{Renewal sets and random
  cutouts}, Z. Wahrsch. V. Geb.
  22 (1972), 145--157.

\bibitem{Or71} S. Orey, \emph{Random arcs on the circle},
  University of Minnesota, 1971.

\bibitem{Persson} T. Persson, \emph{Inhomogeneous potentials,
  Hausdorff dimension and shrinking targets}, Ann. H.
  Lebesgue, 2 (2019), 1--37.
  
\bibitem{Shepp} L. A. Shepp, \emph{Covering the circle with
  random arcs}, Israel J. Math. 11 (1972), 328--345.
  
\bibitem{ST03} S. Troubetzkoy, and J. Schmeling,
  \emph{Inhomogeneous Diophantine approximation and angular
    recurrence properties of the billiard flow in certain
    polygons}. Mat. Sb. 194, no. 2 (2003): 129--44. translation
  in Sb. Math. 194, no. 2 (2003): 295--309.
  
\bibitem{Wa12} M. Waldschmidt, \emph{Recent Advances in
  Diophantine Approximation.} Number Theory, Analysis and
  Geometry. 659--704. New York: Springer, 2012.

\end{thebibliography}
\end{document}